\makeatletter \@addtoreset{equation}{section} \makeatother
\theoremstyle{plain}
\theoremstyle{plain}
\newtheorem{maintheorem}{Theorem}
\newtheorem{theorem}{Theorem }[section]
\newtheorem{proposition}[theorem]{Proposition}
\newtheorem{lemma}[theorem]{Lemma}
\newtheorem{corollary}[theorem]{Corollary}
\theoremstyle{definition} \theoremstyle{remark}
\newtheorem{remark}[theorem]{Remark}
\DeclareMathAlphabet{\mathpzc}{OT1}{pzc}{m}{it}
\newcommand{\NN}{{\mathbb N}}
\newcommand{\ZZ}{{\rm\bf Z}}
\newcommand{\RR}{{\mathbb R}}
\newcommand{\EU}{{\rm\bf S}^3}
\newcommand{{\markov}}{T}
\newcommand{\In}{{\text{In}}}
\newcommand{\Out}{{\text{Out}}}
\newcommand{\Fix}{{\text{Fix}}}
\newcommand{\loc}{{\text{loc}}}
\numberwithin{equation}{section}
\begin{document}

\title[Complete set of invariants for a Bykov attractor]{Complete set of invariants for a Bykov attractor}
\author[M. Carvalho]{Maria Carvalho}
\address{Centro de Matem\'atica da Univ. do Porto,
Rua do Campo Alegre, 687,
4169-007 Porto, Portugal}
\email{mpcarval@fc.up.pt}

\author[A. A. P. Rodrigues]{Alexandre A. P. Rodrigues}
\address{Centro de Matem\'atica da Univ. do Porto,
Rua do Campo Alegre, 687,
4169-007 Porto, Portugal}
\email{alexandre.rodrigues@fc.up.pt}

\date{\today}
\thanks{MC and AR were partially supported by CMUP (UID/MAT/00144/2013), which is funded by FCT with national (MEC) and European structural funds through the programs FEDER, under the partnership agreement PT2020. AR also acknowledges financial support from Program INVESTIGADOR FCT (IF/00107/2015). This work has greatly benefited from AR's visit to Nizhny Novgorod University, supported by the grant RNF 14-41-00044. The authors are grateful to the referee for the careful reading of this manuscript and the valuable comments.}
\keywords{Bykov attractor; historic hehaviour; conjugacy; complete set of invariants.}
\subjclass[2010]{34C28; 34C29; 34C37; 37D05; 37G35}

\begin{abstract} In this paper we consider an attracting heteroclinic cycle made by a $1$-dimensional and a $2$-dimensional separatrices between two hyperbolic saddles having complex eigenva\-lues. The basin of the global attractor exhibits historic behaviour and, from the asymptotic properties of these non-converging time averages, we obtain a complete set of invariants under topological conjugacy in a neighborhood of the cycle. These invariants are determined by the quotient of the real parts of the eigenvalues of the equilibria, a linear combination of their imaginary components and also the transition maps between two cross sections on the separatrices.
\end{abstract}

\maketitle

\setcounter{tocdepth}{1}


\section{Introduction}
The classification of vector fields according to their topological properties is a major concern in the study of dynamical systems, and it has been often addressed in recent years in order to distinguish what seems to be similar dynamical systems and to study the stabi\-lity of their properties. Dimension three is the lowest dimension where one finds chaos for flows, but even in this low-dimensional setting a vast catalogue of exotic dynamical phenomena is already known. For instance, the Lorenz attractor \cite{W1979, GW1979,T2002}, or the homoclinic cycle associated to a saddle-focus studied by Shilnikov \cite{Shilnikov65, Shilnikov67}, or else the spiralling attractors generated by heteroclinic networks that Bykov introduced in \cite{Bykov1999, Bykov2000}. The latter has recently attracted more attention due to the need to understand the global dynamical pro\-perties induced by the non-real eigenvalues at the equilibria and the presence of homoclinic or heteroclinic networks; see \cite{HS2010} for background material, related accounts and references therein.

It is known that a vector field $f$ which exhibits a heteroclinic tangency cannot be structurally stable \cite{Rob1973}. This gives rise to interesting invariants under topological conjugacy and implies the existence of an uncountable number of different conjugacy classes in any small neighborhood of $f$. However, it might be possible to describe all possible conjugacy classes with finitely many independent real parameters, which would provide, in particular, a workable description of the systems near $f$. If this is the case, it is said that the \emph{modulus of stability} of $f$ is finite and equal to the minimum number of parameters.\\

\subsection{Modulus for heteroclinic connections}
Regarding invariants for vector fields on $3$-dimensional manifolds, Dufraine discussed in \cite{Duf2001} vector fields with two saddle-focus equilibria $\sigma_1$ and $\sigma_2$, with eigenvalues
\begin{eqnarray}\label{eq:complex-eigenvalues}
-C_{1} \pm i\,\omega_1 & \quad \quad \text{and}  \quad \quad & E_{1} \\
E_{2} \pm i \, \omega_2 &\quad  \quad \text{and}  \quad \quad & -C_{2} \nonumber
\end{eqnarray}
where
\begin{equation}\label{eq:values}
\omega_1,\,\, \omega_2 > 0, \quad C_1 > E_1 > 0, \quad C_2 > E_2 > 0.
\end{equation}
It is also assumed that the equilibria are connected by their $1$-dimensional invariant manifolds (unstable for $\sigma_1$ and stable for $\sigma_2$). Aware of the invariant $\gamma_1=\frac{C_1}{E_2}$ introduced by Palis in \cite{P1978}, which involves the eigenvalues not related with the connection manifold, the author computes another conjugacy invariant associated to the complex parts of the eigenvalues of the vector field at the equilibria, namely $\omega_1 + \gamma_1\,\omega_2$. Dufraine also claims that these two invariants constitute a complete set if the transition map along the $1$-dimensional connection is a homothety-rotation.

Generalizing this approach, but this time aiming at a complete characterization up to topological equivalence, Bonatti and Dufraine considered in \cite{BD2003} the same setting of \cite{Duf2001} while assuming that the transition map along the $1$-dimensional connection, when expressed in suitable coordinates, is linear, with matrix
$\left(\begin{array}{cc}
1 & 0 \\
0 & \lambda
\end{array} \right)$
for some $\lambda \geq 1$, and conformality constant $c=\frac{1}{2}(\lambda + \frac{1}{\lambda})$. Then they showed that a complete set of invariants for orientation preserving topological equivalence essentially depends on Palis' invariant and $c$. More precisely, given two such vector fields whose transition maps have conformality constants $c$ and $\widetilde{c}$ sufficiently small (a size measured by an adequate continuous function $\psi \geq 1$ which depends on $\omega_1$, $\omega_2$, $C_1$ and $E_2$), then the vector fields are topologically equivalent in a neighborhood of the corresponding connections, and they are positively topologically equivalent if and only if one of the following conditions is satisfied: either
\begin{equation}\label{eq:4th invariant}
c=\widetilde{c}=1 \quad \text{ and } \quad \omega_1 - \gamma_1\,\omega_2=0=\widetilde{\omega_1} - \widetilde{\gamma_1}\,\widetilde{\omega_2}
\end{equation}
or else
$$\left(\omega_1 - \gamma_1\,\omega_2\right)\left(\widetilde{\omega_1} - \widetilde{\gamma_1}\,\widetilde{\omega_2}\right)>0.$$
Although not relevant to our setting, we notice that the authors also proved that, when $c$ and $\widetilde{c}$ are both too big, then a complete set of invariants is made by the quotient $\frac{\omega_1}{\gamma_1\,\omega_2}$ and the value at $(\frac{\omega_1}{C_1}, \, \frac{\omega_2}{E_2},\, c)$ of a function related to $\psi$.

Finally, in \cite{SSimo1989}, Sim\'o and Sus\'in analyzed vector fields with a heteroclinic $2$-dimensional connection between $\sigma_1$ and $\sigma_2$, and proved that the classes of conjugacy of these systems can be characterized by one parameter only, depending on the eigenvalues of the equilibria which are not related to the two-dimensional manifold, namely
$$\gamma_2 = \frac{C_2}{E_1}.$$

\subsection{Modulus for heteroclinic cycles}
Concerning heteroclinic cycles, for planar vector fields and based on Bowen's example, Takens described in \cite{Takens1994} a complete set of topological invariants under topological conjugacies for attracting heteroclinic cycles with two $1$-dimensional connections between two hyperbolic saddles with real eigenvalues $-C_{1} < 0$, $E_{1}>0$ and $-C_2 <0$, $E_{2} > 0$, respectively. Takens assumes that the transition maps on $1$-dimensional cross sections are linear, namely the identity map and $x \mapsto a\,x$ for some $0<a<1$. The set of the three invariants characterized by Takens includes, as expected, the ones previously reported by Palis in \cite{P1978} associated to each $1$-dimensional connection of the cycle, that is,
$$\gamma_1=\frac{C_1}{E_2} \quad \text{ and } \quad \gamma_2=\frac{C_2}{E_1}.$$
In addition to these, Takens found another invariant which is primarily determined by the transition maps and is given by
$$\frac{1}{E_1}(1+\gamma_1)\,\log a.$$
Takens' construction of the conjugacy uses asymptotic properties of non-converging Birkhoff time averages, the so called historic behavior \cite{Ruelle2001}, which we will recall in Subsection~\ref{ssec:hist.behavior}.

\subsection{Modulus of stability for a Bykov attractor}
In this work, we will consider vector fields acting on the $3$-dimensional sphere with two hyperbolic equilibria admitting complex eigenvalues satisfying \eqref{eq:complex-eigenvalues} and \eqref{eq:values}, and exhibiting both a one-dimensional separatrix as in \cite{Duf2001} and a two-dimensional connection like the one in \cite{SSimo1989}. This network is called a Bykov attractor.
 The behavior of such a vector field $f$ in the vicinity of the connections is essentially given, up to conjugacy, by the linear part of $f$ in linearizing neighborhoods of the equilibria and by the transition maps between two discs transversal to $f$ and contained in those neighborhoods. Therefore, we expect to revisit the two invariants described in \cite{Duf2001}, the one introduced in \cite{SSimo1989} and the invariant Takens found in \cite{Takens1994}.

Following the strategy of Takens \cite{Takens1994}, we will select suitable $2$-dimensional cross sections at the connections and assume, as done in \cite{Takens1994}, \cite{BD2003} and \cite{Bykov1999}, that in appropriate coordinates 
the transition maps are linear, whose $2\times2$ matrices are diagonal and have non-zero determinants, namely $\left(\begin{array}{cc}
\frac{1}{a} & 0 \\
0 & a
\end{array} \right)$ and
$\left(\begin{array}{cc}
1 & 0 \\
0 & \lambda
\end{array} \right)$, for some $0 < a < 1$ and some $\lambda \geq 1$. We will assume, as done in \cite{Takens1994} for a heteroclinic connection between two saddles with real eigenvalues, that the transition along the one-dimensional connection is the identity map (that is, $\lambda = 1$) for all the vector fields under consideration. This simplifies the computations without demanding more invariants. Indeed, according to Bonatti and Dufraine's result \eqref{eq:4th invariant}, only if we are looking for an orientation preserving topological equivalence is it required that either $\frac{\omega_1}{\gamma_1 \omega_2}$ is equal to $1$ for each conjugated pair of vector fields or the sign of $\omega_1-\gamma_1\,\omega_2$ is identical for both vector fields. We observe that, since $\gamma_1$ and $\omega_1 + \gamma_1 \omega_2$ are invariants by conjugacy (cf. \cite{Duf2001}), using the equality
$$\omega_1 + \gamma_1 \omega_2 = \omega_1 \left(1 + \frac{\gamma_1 \omega_2}{\omega_1}\right)$$
we conclude that the value of $\frac{\omega_1}{\gamma_1 \omega_2}$ is equal for two conjugate Bykov attractors if and only if the values of $\omega_1$ and $\omega_2$ are the same for the two vector fields. This outcome resembles the topological invariants found by Dufraine in \cite{Duf2001} for homoclinic orbits of saddle-focus type, the so called Shilnikov cycles.

We will also suppose, as in \cite{Takens1994}, that the transitions along the one-dimensional and two-dimensional connections happen instantaneously. This is a reasonable assumption due to the fact that, as the Bykov attractor is asymptotically stable (cf. \cite{KM1995}), if $P$ belongs to its basin then the period of time spent by the orbit $\left(\varphi(t, P)\right)_{t \in \mathbb{R}^+}$ inside small neighborhoods of $\sigma_1$ and $\sigma_2$ tends to infinity as $t \to + \infty$, whereas the time used to travel between these two neighborhoods remains uniformly bounded in length. Afterwards, we will analyze the sequence of hitting times of each orbit at chosen cross sections and show that a complete set of invariants under topological conjugacy for these Bykov vector fields is given by
$$\gamma_1=\frac{C_1}{E_2}, \quad \gamma_2=\frac{C_2}{E_1}, \quad \omega_1 + \gamma_1\,\omega_2, \quad \frac{1}{E_1}(1+\gamma_1)\,\log a.$$

\section{Description of the vector fields}\label{se:setting}

Let $f: \EU \to \mathbb{R}^4$ be a $C^r$, $r \geq 3$, vector field on the $C^\infty$ Riemannian $3$-dimensional differential manifold
$$\EU=\{(x_1, x_2, x_3, x_4) \in \RR^4 \colon x_1^2 + x_2^2 + x_3^2 + x_4^2 = 1\}$$
whose corresponding flow is given by the unique solutions $t \in \mathbb{R} \mapsto x(t)=\varphi(t,x_{0})\in \EU$ of the initial-value problem
$$\left\{\begin{array}{l}
\dot{x}(t) = f(x(t)) \\
x(0) = x_{0}.
\end{array}
\right.
$$
We will request that the organizing center of $f$ satisfies the following conditions: \\

\begin{enumerate}
\item[P1.] The vector field $f$ is equivariant under the action of $\ZZ_2 \oplus \ZZ_2$ on $\EU$ induced by the two linear maps on $\RR^4$
\begin{eqnarray*}
\Gamma_1(x_1,x_2,x_3,x_4) &=& (-x_1,-x_2,x_3,x_4) \\
\Gamma_2(x_1,x_2,x_3,x_4) &=& (x_1,x_2,-x_3,x_4).
\end{eqnarray*}
That is, $f \circ \Gamma_1 = \Gamma_1 \circ f$ and $f \circ \Gamma_2 = \Gamma_2 \circ f$.
\vspace{0.3cm}
\item[P2.] The set $\Fix\,(\ZZ_2 \oplus \ZZ_2)=\{Q \in \EU:\Gamma_1(Q)=\Gamma_2(Q)= Q\}$ reduces to two equilibria, namely $\sigma_1=(0,0,0,1)$ and $\sigma_2=(0,0,0,-1)$, which are hyperbolic saddle-foci whose eigenvalues are, respectively,
\begin{eqnarray*}
-C_{1} \pm \omega_1 \,i &\quad \quad \text{and}  \quad \quad & E_{1} \\
E_{2} \pm \omega_2 \,i &\quad  \quad \text{and}  \quad \quad& -C_{2}
\end{eqnarray*}
where
\begin{equation}\label{eq:eigenvalues}
\omega_1 > 0, \quad \omega_2 > 0, \quad C_1 > E_1 > 0, \quad  C_2 > E_2 > 0.
\end{equation}
\vspace{0.2cm}
\item[P3.] The flow-invariant circle $\Fix\,(\ZZ_2(\Gamma_{1}))=\{Q \in \EU:\Gamma_1(Q) = Q\}$ consists of the two equilibria, $\sigma_1$ and $\sigma_2$, and two heteroclinic trajectories from $\sigma_1$ to $\sigma_2$ we denote by $[\sigma_1 \rightarrow \sigma_2]^{ext}$ and $[\sigma_1 \rightarrow \sigma_2]^{int}$, whose union will be simply called $[\sigma_1 \rightarrow \sigma_2]$.
\vspace{0.5cm}
\item[P4.] The flow-invariant sphere $\Fix\,(\ZZ_2(\Gamma_{2}))=\{Q \in \EU:\Gamma_2(Q) = Q \}$ is made of the two equilibria $\sigma_1$ and $\sigma_2$ and a two-dimensional heteroclinic connection from $\sigma_2$ to $\sigma_1$.
\vspace{0.5cm}
\item[P5.] The saddle-foci $\sigma_1$ and $\sigma_2$ have the same chirality (which means that near $\sigma_1$ and $\sigma_2$ all trajectories turn in the same direction around the one-dimensional connections $[\sigma_1 \rightarrow \sigma_2]^{ext}$ and $[\sigma_1 \rightarrow \sigma_2]^{int}$; see \cite{LR2015} for more information).
\vspace{0.5cm}
\end{enumerate}
We denote by $\mathfrak{X}^r_{\text{Byk}}(\EU)$ the set of $C^r$, $r\geq 3$, smooth $\ZZ_2\oplus \ZZ_2$--equivariant vector fields on $\EU$ that satisfy the assumptions (P1)--(P5), endowed with the $C^r$--Whitney topology.  Figure~\ref{cycle_scheme} illustrates the previous information concerning $\Fix\,(\Gamma_1)$ and $\Fix\,(\Gamma_2)$.

\begin{figure}[h]
\begin{center}
\includegraphics[height=6cm]{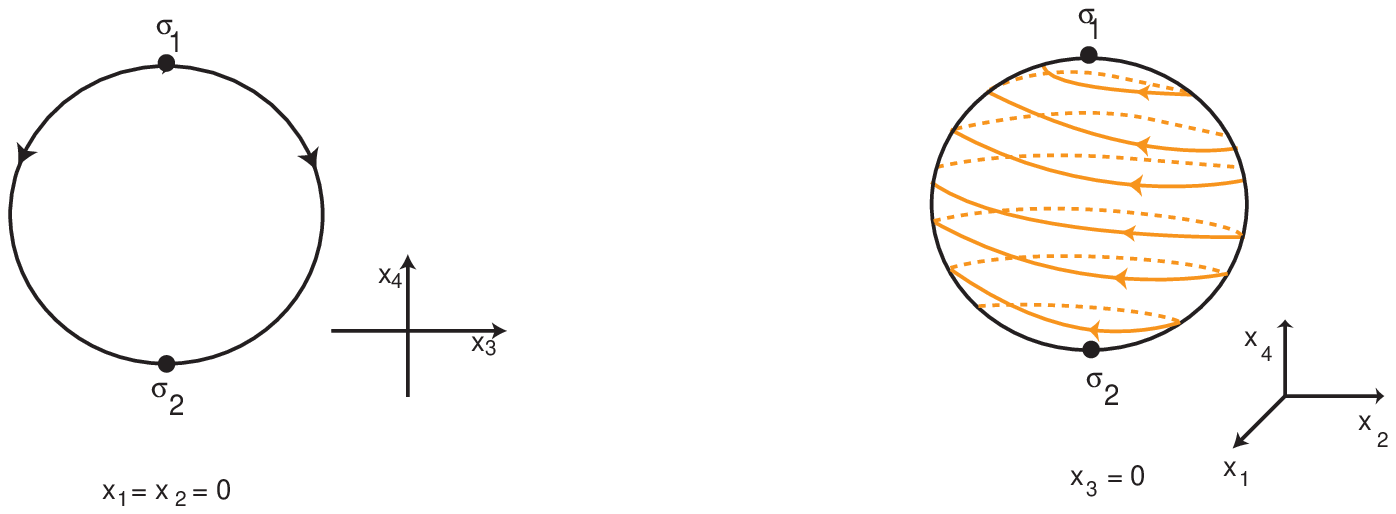}
\end{center}
\caption{\small Heteroclinic connections of the organizing center: (a) The invariant circle $\Fix\,(\Gamma_1)$, parameterized in $\EU$ by $x_3^2 + x_4^2 = 1$, consists of $\sigma_1$ and $\sigma_2$ and two trajectories connecting them. (b) The invariant sphere $\Fix\,(\Gamma_2)$, parameterized in $\EU$ by $x_1^2 + x_2^2 +x_4^2 = 1$, is a two-dimensional connection between $\sigma_2$ and $\sigma_1$.}
\label{cycle_scheme}
\end{figure}

The two equilibria $\sigma_1$ and $\sigma_2$, the two trajectories listed in (P3) and the two-dimensional heteroclinic connection from $\sigma_2$ to $\sigma_1$ refered to in (P4) build a heteroclinic network we will denote hereafter by $\mathcal{A}_f$. This set is the global Bykov attractor of the dynamical system $f$ in an open set $V^0 \subset \EU$. More precisely, the heteroclinic network $\mathcal{A}_f$ is asymptotically stable, that is, there exists an open neighborhood $V^0$ of $\mathcal{A}_f$ in $\EU$  such that every solution starting in $V^0$ remains inside $V^0$ for all positive times and  is forward asymptotic to the network $\mathcal{A}_f$.

\begin{remark}
The assumptions (P1)--(P5) define a degenerate set of vector fields exhibiting heteroclinic connections, a dynamical phenomenon which is natural within systems with symmetry. If we slightly perturb a vector field in $\mathfrak{X}^r_{\text{Byk}}(\EU)$ in order to break the connection $\Fix(\ZZ_2(\Gamma_2))$ while preserving the $\Gamma_1$--equivariance, generically we obtain what the authors of \cite{LR2015} call a Bykov cycle, with saturated horseshoes accumulating on it. Finding a complete set of invariants for these Bykov cycles is still an open problem.
\end{remark}

\section{Main definitions}\label{se:definitions}

\subsection{Invariants under conjugacy}\label{ssec:Modulus}
Given two systems $\dot{x} = f_1(x)$ and $\dot{x} = f_2(x)$, defined in domains $D_1\subset \EU$ and $D_2\subset \EU$, respectively, let $\varphi_i(t, x_0)$ be the unique solution of $\dot{x}=f_i(x)$ with initial condition $x(0)=x_0$, for $i \in \{1,2\}$. We say that the corresponding flows are topologically equivalent in subregions $U_1 \subset D_1$ and $U_2 \subset D_2$ if there exists a homeomorphism $h: U_1 \rightarrow U_2$ which maps solutions of the first system onto solutions of the second preserving the time orientation. If $h$ is also time preserving, that is, if for every $x \in \EU$ and every $t \in \RR$, we have $\varphi_1(t, h(x))=h(\varphi_2(t,x))$, the flows are said to be \emph{topologically conjugate} and $h$ is called a \emph{topological conjugacy}.

A functional $\mathcal{I}$ defined on a set $\mathcal{V}$ of vector fields is an \emph{invariant under topological conjugacy} if, whenever two vector fields $f$ and $\tilde{f}$ in $\mathcal{V}$ are conjugate, then $\mathcal{I}(f)=\mathcal{I}(\tilde{f})$. A set of invariants under topological conjugacy is said to be \emph{complete} if, given two systems with equal invariants, there exists a topological conjugacy between the corresponding flows. For the sake of simplicity, in what follows we will talk about a topological conjugacy between two vector fields while meaning a conjugacy defined on some neighbourhood of the global attractors of the associated flows.

\subsection{Historic behavior}\label{ssec:hist.behavior}
We say that the orbit of a point $P$ by a flow $\varphi: \mathbb{R} \times X \to X$ has historic behavior if, for some continuous function $G: X \to \mathbb{R}$, the Birkhoff averages of $G$ along the orbit of $P$
$$\left(\frac{1}{t}\int_0^t \,G (\varphi(s,P))\,ds\right)_{t \, \in \, \mathbb{R}^+}$$
does not converge.

\section{Constants}\label{se:constants}

For future use, we settle the following notation:
\begin{equation*}
\gamma_1 = \frac{C_1}{E_2} \qquad \qquad \gamma_2 = \frac{C_2}{E_1} \qquad \qquad \delta_1 = \frac{C_1}{E_1} \qquad \qquad \delta_2 = \frac{C_2}{E_2}
\end{equation*}
\begin{equation*}
K = \frac{1}{E_1}\,\left(\omega_1 + \gamma_1\,\omega_2\right) \qquad \qquad \tau = \frac{1}{E_1}\,(1+\gamma_1) \qquad \qquad \delta = \gamma_1\gamma_2 = \delta_1\delta_2 = \frac{C_1C_2}{E_1 E_2}.
\end{equation*}
\bigbreak
According to the assumptions stated in \eqref{eq:eigenvalues}, we have $K > 0$, $\tau > 0$, $\delta_1 > 1$ and $\delta_2 > 1$.

\section{Main results}\label{se:results}
There are well-known examples of dynamical systems with interesting sets of orbits exhibiting historic behavior, as the logistic family \cite{HK1990}, the example of Bowen \cite{Takens1994}, the full shifts on finite symbols \cite{HK1990, Takens2008, BS2000, CV2001, KS2017}, Gaunersdorfer's systems on a simplex \cite{Gau1992} and the Lorenz attractor \cite{KLS2016}. Following the arguments of Takens in \cite{Takens1994} and Gaunersdorfer in \cite{Gau1992}, we will add to this list another example in Proposition~\ref{thm:A}. Indeed, although the points in the attractor $\mathcal{A}_f$ do not have historic behavior since the $\omega$-limit and $\alpha$-limit of their orbits are either $\sigma_1$ or $\sigma_2$, all elements in its proper basin of attraction
$$\mathcal{B}(\mathcal{A}_f)=\{P \in \EU \colon \,\, \text{the accumulation points of} \,\, \left(\varphi(t,P)\right)_{t \,\in\, \mathbb{R}^+} \text{ belong to } \mathcal{A}_f\} \setminus \mathcal{A}_f$$
display this kind of behavior.

\begin{proposition}\label{thm:A} Let $f$ be a vector field in $\mathfrak{X}^r_{\text{Byk}}(\EU)$. Given a continuous map $G\colon \EU \to \mathbb{R}$ and $P \in \mathcal{B}(\mathcal{A}_f)$, there exist a sequence $(t_k)_{k\,\in\, \NN}$, which depend on $P$ and $G$, such that
$$\lim_{i\,\to \, +\infty} \,\frac{1}{t_{2i}} \,\int_0^{t_{2i}} G(\varphi(t,\,P))\,dt = \frac{1}{1+ \gamma_1} \,G(\sigma_1) + \frac{\gamma_1}{1+ \gamma_1} \,G(\sigma_2)$$
and
$$\lim_{i\,\to\, +\infty}\, \frac{1}{t_{2i+1}}\, \int_0^{t_{2i+1}} G(\varphi(t,\,P))\,dt = \frac{\gamma_2}{1 + \gamma_2}\, G(\sigma_1) + \frac{1}{1 + \gamma_2} \,G(\sigma_2).$$
\medskip

\noindent Consequently, every point in $\mathcal{B}(\mathcal{A}_f)$ has historic behavior.
\end{proposition}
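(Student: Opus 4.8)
\noindent\emph{Reduction to a return map.} The plan is to work in linearising neighbourhoods $V_1\ni\sigma_1$, $V_2\ni\sigma_2$, where in suitable coordinates the flow near $\sigma_1$ is $(\dot r,\dot\theta,\dot z)=(-C_1 r,\omega_1,E_1 z)$ with $\{z=0\}$ the local two‑dimensional stable manifold, and near $\sigma_2$ is $(\dot\rho,\dot\psi,\dot\xi)=(E_2\rho,\omega_2,-C_2\xi)$ with $\{\xi=0\}$ the local two‑dimensional unstable manifold, and to use as transversals the discs $\{r=\varepsilon\}$, $\{z=\varepsilon\}$, $\{\xi=\varepsilon\}$, $\{\rho=\varepsilon\}$ for a fixed small $\varepsilon$; all estimates below hold up to errors bounded uniformly in the turn index. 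An orbit entering $V_1$ with $z$‑coordinate $z_n$ leaves it after time $T_1^{(n)}=\tfrac1{E_1}\ln(1/z_n)+O(1)$, with $r$‑coordinate $\asymp z_n^{\delta_1}$ on exit; composing with the transition along the one‑dimensional connection (linear, hence carrying the connection to itself with transverse coordinate changed by a factor bounded above and away from $0$), it enters $V_2$ with $\rho$‑coordinate $\rho_n\asymp z_n^{\delta_1}$, leaves after $T_2^{(n)}=\tfrac1{E_2}\ln(1/\rho_n)+O(1)$ with $\xi$‑coordinate $\asymp\rho_n^{\delta_2}$ on exit, and, after the transition along the two‑dimensional connection, re‑enters $V_1$ with $z$‑coordinate $z_{n+1}\asymp\rho_n^{\delta_2}\asymp z_n^{\delta_1\delta_2}=z_n^{\delta}$. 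Since $\mathcal A_f$ is asymptotically stable (\cite{KM1995}) and $P\in\mathcal B(\mathcal A_f)$, the orbit of $P$ eventually stays in a small tubular neighbourhood of $\mathcal A_f$; and as $\omega(P)\subseteq\mathcal A_f$ is not $\{\sigma_1\}$ nor $\{\sigma_2\}$ (the stable sets of $\sigma_1,\sigma_2$ lie in $\mathcal A_f$, yet $P\notin\mathcal A_f$), $\omega(P)$ contains both equilibria, so the orbit loops through $V_1$ and $V_2$ alternately, with $z_n\to0$.

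\noindent\emph{Asymptotics of the sojourn times.} Setting $u_n:=-\ln z_n\to+\infty$, the recursion above reads $u_{n+1}=\delta\,u_n+b_n$ with $\delta=\gamma_1\gamma_2=\delta_1\delta_2>1$ and $(b_n)$ bounded; its solution is $u_n=\delta^n(\ell+o(1))$ for some $\ell>0$, the additive errors being exponentially damped, whence $u_{n+1}/u_n\to\delta$. Combined with $T_1^{(n)}=\tfrac1{E_1}u_n+O(1)$, $-\ln\rho_n=\delta_1 u_n+O(1)$ and $\tfrac{\delta_1}{E_2}=\tfrac{C_1}{E_1E_2}=\tfrac{\gamma_1}{E_1}$, this gives
$$T_1^{(n)}\longrightarrow+\infty,\qquad \frac{T_2^{(n)}}{T_1^{(n)}}\longrightarrow\gamma_1,\qquad \frac{T_1^{(n+1)}}{T_1^{(n)}}\longrightarrow\delta=\gamma_1\gamma_2 .$$
Write $A_N=\sum_{n\le N}T_1^{(n)}$ and $B_N=\sum_{n\le N}T_2^{(n)}$ for the cumulative times near $\sigma_1$ and near $\sigma_2$ after $N$ turns. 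The uniformly bounded transition times contribute only $O(N)=o(A_N)$; moreover $A_{N+1}/A_N\to\delta$ (the last excursion dominates the geometrically growing sum) and $B_N/A_N\to\gamma_1$ (a Ces\`aro argument from $T_2^{(n)}/T_1^{(n)}\to\gamma_1$).

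\noindent\emph{The two limits and historic behaviour.} Let $t_{2i}$ be the instant the orbit leaves $V_2$ for the $i$‑th time and $t_{2i+1}$ the instant it leaves $V_1$ for the $(i+1)$‑th time; these interlace, $t_k\to+\infty$, and up to $O(1)$ the time spent in $V_1$, resp. $V_2$, before $t_{2i}$ is $A_i$, resp. $B_i$, and before $t_{2i+1}$ is $A_{i+1}$, resp. $B_i$, so $t_{2i}=(1+\gamma_1)A_i(1+o(1))$ and $t_{2i+1}=(\delta+\gamma_1)A_i(1+o(1))$. Now split $\int_0^{t_k}G(\varphi(t,P))\,dt$ into the contributions of the excursions in $V_1$, of those in $V_2$, and of the transitions; the last is bounded by $\|G\|_\infty\cdot O(\#\mathrm{turns})=o(t_k)$. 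For the $V_1$‑contribution, fix $\varepsilon'<\varepsilon$: during its $n$‑th excursion the orbit stays in $B(\sigma_1,\varepsilon')$ for all of $T_1^{(n)}$ except a constant $\tfrac1{E_1}\ln(\varepsilon/\varepsilon')$, so (as $T_1^{(n)}\to\infty$) the average of $G$ over that excursion tends to within $\omega_G(\varepsilon')$ of $G(\sigma_1)$; since $\varepsilon'$ is arbitrary and a convex combination — with weights $T_1^{(n)}/A_i$ whose fixed initial blocks vanish as $i\to\infty$ — of a convergent sequence converges to its limit, the $V_1$‑contribution equals $G(\sigma_1)\,A_i(1+o(1))$, and likewise the $V_2$‑contribution equals $G(\sigma_2)\,B_i(1+o(1))=\gamma_1\,G(\sigma_2)\,A_i(1+o(1))$. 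Dividing by $t_{2i}$ and by $t_{2i+1}$, and using $\delta=\gamma_1\gamma_2$ (so $\tfrac{\delta}{\delta+\gamma_1}=\tfrac{\gamma_2}{1+\gamma_2}$ and $\tfrac{\gamma_1}{\delta+\gamma_1}=\tfrac1{1+\gamma_2}$), yields exactly the two stated limits. Finally $\delta>1$ forces $\tfrac1{1+\gamma_1}\ne\tfrac{\gamma_2}{1+\gamma_2}$, so the two limits differ whenever $G(\sigma_1)\ne G(\sigma_2)$; choosing such a continuous $G$ (e.g. $G(x_1,x_2,x_3,x_4)=x_4$) shows the Birkhoff averages of $G$ along the orbit of $P$ do not converge, i.e. $P$ has historic behaviour.

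\noindent\emph{Main obstacle.} The delicate part is the error‑bookkeeping in the second step: one must verify that the uniformly bounded but non‑constant errors acquired at each passage (from the angular/spiralling motion and the fact that the discs are only approximately the coordinate level sets) do not corrupt the three displayed limits. This is precisely where the strict inequalities $\delta_1,\delta_2>1$ enter — they damp the additive errors in the $u_n$‑recursion exponentially and make the last excursion dominate the sums $A_i$, $B_i$, which is what brings $\gamma_2$ (through $\delta=\gamma_1\gamma_2$) into the limit along $t_{2i+1}$, whereas the limit along $t_{2i}$ depends only on $\gamma_1$. A secondary subtlety is that the transversals are fixed at scale $\varepsilon$ although the conclusion needs the average of $G$ near $\sigma_i$ to equal $G(\sigma_i)$ exactly: this is handled not by shrinking $V_i$ (which would alter the $t_k$) but by exploiting $T_i^{(n)}\to+\infty$, so that each excursion is asymptotically concentrated arbitrarily deep inside $V_i$.
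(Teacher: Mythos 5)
Your proof is correct and follows essentially the same route as the paper: decompose the trajectory into sojourns near $\sigma_1$ and $\sigma_2$, use the linearized flow to show the sojourn-time ratios tend to $\gamma_1$, $\gamma_2$ and $\delta=\gamma_1\gamma_2$ (the paper packages this as Lemma~\ref{le:calculus} and Corollary~\ref{cor:speed-of-convergence}), and then compute the two subsequential Birkhoff limits from the resulting weights $\frac{1}{1+\gamma_1}$ and $\frac{\gamma_2}{1+\gamma_2}=\frac{\delta}{\delta+\gamma_1}$. Your explicit treatment of the recursion $u_{n+1}=\delta u_n+b_n$ and of the weighted Ces\`aro averaging of $G$ over excursions is a slightly more self-contained bookkeeping of steps the paper handles via Deng's linearization estimates and leaves partly as ``an analogous computation,'' but it is the same argument.
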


As happens in the context of Bowen's planar vector fields \cite{Takens1994}, the previous result is the source of four invariants under topological conjugacy, generated by the sequences of hitting times at two cross sections appropriately chosen in a neighborhood of the attractor $\mathcal{A}_f$. We will also show that they build a complete set of invariants.

\begin{maintheorem}\label{thm:B} Let $f$ be a vector field in $\mathfrak{X}^r_{\text{Byk}}(\EU)$. Then $\gamma_1$, $\gamma_2$, $\omega_1+\gamma_1\omega_2$ and $\tau \log a$ form a complete set of invariants for $f$ under topological conjugacy.
\end{maintheorem}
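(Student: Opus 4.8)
The plan is to prove Theorem~\ref{thm:B} in two halves: invariance and completeness. For \textbf{invariance}, I would first establish that $\gamma_1$, $\gamma_2$ and $\omega_1+\gamma_1\omega_2$ are already known to be conjugacy invariants from the work of Palis, Dufraine and Sim\'o--Sus\'in cited in the introduction; strictly, however, one should verify that those invariants, originally associated with single heteroclinic connections, survive when the connections are assembled into the Bykov cycle, and this is where Proposition~\ref{thm:A} does the work. The idea is that a topological conjugacy $h$ near the attractor must fix $\sigma_1$ and $\sigma_2$ (they are the only equilibria, distinguished by the dimensions of their invariant manifolds, so $h$ cannot swap them), hence for any continuous $G$ the non-convergent Birkhoff averages of $G$ and of $G\circ h^{-1}$ along conjugate orbits must have the same set of accumulation points. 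Reading off from Proposition~\ref{thm:A} the two subsequential limits $\frac{1}{1+\gamma_1}G(\sigma_1)+\frac{\gamma_1}{1+\gamma_1}G(\sigma_2)$ and $\frac{\gamma_2}{1+\gamma_2}G(\sigma_1)+\frac{1}{1+\gamma_2}G(\sigma_2)$, and choosing $G$ so that $G(\sigma_1)\neq G(\sigma_2)$, forces $\gamma_1=\widetilde\gamma_1$ and $\gamma_2=\widetilde\gamma_2$. For the remaining two invariants I would pass to the sequence of \emph{hitting times} $(t_k)$ of a fixed orbit at two cross sections transverse to the one- and two-dimensional connections: using the linearized flow near each saddle-focus, the time spent near $\sigma_1$ between consecutive visits is, to leading order, $\frac{1}{C_1}\log(\text{something})$ and similarly near $\sigma_2$, so the \emph{ratios of consecutive return times} $t_{k+1}/t_k$ converge to explicit functions of $\delta_1,\delta_2$ (equivalently of $\gamma_1,\gamma_2$), while the angular coordinate of the iterates, governed by the $\omega_i$, produces $\omega_1+\gamma_1\omega_2$ as the invariant controlling the rotational part, and the contraction constant $a$ enters through the additive drift $\tau\log a$ in the asymptotics of $t_k$. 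Since a conjugacy preserves the flow parametrization exactly, it preserves the sequence $(t_k)$ up to the bounded transition times, hence preserves these asymptotic quantities.

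For \textbf{completeness}, the strategy is Takens' construction adapted to three dimensions. Given two vector fields $f,\widetilde f\in\mathfrak{X}^r_{\text{Byk}}(\EU)$ with the same four numbers $\gamma_1,\gamma_2,\omega_1+\gamma_1\omega_2,\tau\log a$, I would build the conjugacy $h$ piece by piece. First, fix linearizing coordinates near $\sigma_1$ and $\sigma_2$ for both flows and fix the four cross sections (two per connection). Second, define $h$ on one chosen cross section, say the incoming section $\widehat U$ of the one-dimensional connection near $\sigma_1$, as an explicit homeomorphism intertwining the first-return maps of $f$ and $\widetilde f$; because the transition maps are assumed linear-diagonal with the normalized forms $\mathrm{diag}(1/a,a)$ and $\mathrm{diag}(1,1)$, and because $\lambda=1$, the first-return map on $\widehat U$ is a composition of a rotation by an angle determined by the $\omega_i$ and the transit times, a radial contraction by a power of $a$, and the $\mathrm{diag}(1/a,a)$ shear — all of which are pinned down, up to conjugacy, exactly by $\gamma_1,\gamma_2$ (which fix the ratios of transit times), $\omega_1+\gamma_1\omega_2$ (which fixes the total rotation per return) and $\tau\log a$ (which fixes the radial scaling per return). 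Third, extend $h$ from the cross section to a full neighborhood of the attractor by flowing: for a point $x$ in the basin, push it forward to the next cross-section hit, apply the section conjugacy, and flow backward by the corresponding time in the $\widetilde f$ flow; continuity and well-definedness at the attractor itself are checked using asymptotic stability (the transit times are uniformly bounded, the near-equilibrium times tend to infinity in a controlled way). Finally, one verifies $h$ is a homeomorphism and time-preserving.

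I expect the \textbf{main obstacle} to be the completeness direction, and specifically the matching on the cross sections. The subtlety is that equality of the four scalar invariants must be shown to be \emph{sufficient} to conjugate the full first-return dynamics, which is a planar map with both a rotational and a hyperbolic (shear) component. One has to verify that no further independent modulus is hiding in the interaction between the rotation rate and the $\mathrm{diag}(1/a,a)$ part — this is precisely why the hypothesis $\lambda=1$ and the normalized diagonal form of the transition maps are imposed, and why the introduction spends effort (via the Bonatti--Dufraine inequality~\eqref{eq:4th invariant}) arguing that $\omega_1+\gamma_1\omega_2$ together with $\gamma_1$ already controls $\omega_1$ and $\omega_2$ individually among conjugate attractors. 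A secondary difficulty is the gluing: showing the flowed-out extension of $h$ is genuinely continuous across the stable and unstable manifolds of $\sigma_1,\sigma_2$ and at the attractor, where return times blow up. Here I would lean on the asymptotic-stability estimate quoted after (P5) — bounded transition times versus unboundedly growing sojourn times — to get uniform control, much as Takens does in the planar case, and on the historic-behavior limits of Proposition~\ref{thm:A} to certify that the constructed $h$ respects the correct time averages and hence cannot be obstructed by a residual invariant.
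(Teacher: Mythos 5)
Your invariance half is essentially sound and even somewhat more self-contained than the paper's (which simply cites Palis, Takens, Dufraine and Sim\'o--Sus\'in for that direction); deriving $\gamma_1$ and $\gamma_2$ from the two subsequential Birkhoff limits of Proposition~\ref{thm:A} is legitimate, and the remaining two invariants do indeed come from the asymptotics of the hitting-time sequences (Lemma~\ref{le:calculus} and Corollary~\ref{cor:speed-of-convergence}). One small slip: the sojourn time in $V_{\sigma_1}$ is $-\frac{1}{E_1}\log z$, governed by the expanding eigenvalue, not $\frac{1}{C_1}\log(\cdot)$.

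The completeness half has a genuine gap at exactly the point you flag as the main obstacle. You propose to define $h$ on a cross section as ``an explicit homeomorphism intertwining the first-return maps'' and assert that the four numbers pin this map down, but you never say how to build it, and intertwining the return maps by itself only yields an orbit equivalence: for the flowed-out extension $H(\varphi_t(P))=\overline{\varphi}_t(H(P))$ to be well defined and time-preserving, the \emph{return times} of $P$ and of its image must agree exactly, not merely the return maps. The paper's mechanism for achieving this is the content you are missing. One first replaces the true hitting times $(t_i)$ of $P$ by an idealized sequence $(\widetilde t_i)$ satisfying the exact recurrences $\widetilde T_i-\delta\,\widetilde T_{i-1}=-\tau\log a$ and $\widetilde t_{2i+2}-\widetilde t_{2i+1}=\gamma_1(\widetilde t_{2i+1}-\widetilde t_{2i})$, with $t_i-\widetilde t_i\to 0$; this requires the quantitative remainder estimate $\sum_i i\,|R_i|<\infty$ of Lemma~\ref{le:calculus}(4), which controls the backward recursion defining $\widetilde T_0$. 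One then shows that a point of $\Out^+(\sigma_2)$ is \emph{uniquely determined} by such a time sequence: $z_0$ and the $\rho_{2i+1}$ are read off from \eqref{eq:z}--\eqref{eq:rho}, and --- crucially --- the angular coordinate $\theta_{2i}$ is recovered from the times via the identity \eqref{eq:theta}, whose coefficient is exactly $\omega_1+\gamma_1\omega_2$. The conjugacy is then $P\mapsto Q_P$, where $Q_P$ is the unique point whose $g$-hitting times equal $(\widetilde t_i)$; equality of the four invariants is what makes $Q_P$ exist and makes the assignment injective, continuous and invertible. Without the adjusted sequence, the exact recurrences, and the angular-recovery formula, the assertion that the return dynamics is ``pinned down'' by the four scalars remains an unproved claim, so your argument as written does not close.
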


\section{Dynamics in $\mathcal{B}(\mathcal{A}_f)$}\label{se:localdyn}

We will analyze the dynamics near the network $\mathcal{A}_f$ of a vector field $f \in \mathfrak{X}^r_{\text{Byk}}(\EU)$, $r \geq 3$, using local maps that approximate the dynamics near and between the two equilibria in the network.

\subsection{Local coordinates}\label{sse:local-coordinates}

In order to describe the dynamics in a neighborhood of the Bykov cycle $\mathcal{A}_f$ of $f$, we need a workable expression of the Poincar\'e map of $f$ at a suitable cross section inside this neighborhood. For that, up to a conjugacy, we will select the linearizing coordinates near the equilibria $\sigma_1$ and $\sigma_2$ introduced in \cite{Deng1989}. In these coordinates, and after a linear rescaling of the variables, if needed, we consider two cylindrical neighborhoods $V_{\sigma_1}$ and $V_{\sigma_2}$  in ${\EU}$ of $\sigma_1 $ and $\sigma_2 $, respectively, with base-radius $1$ and height $2$ (see Figure \ref{crosssections1}). Moreover, the boundaries of $V_{\sigma_1}$ and $V_{\sigma_2}$ have three components:
\begin{enumerate}
\item The cylinder wall, parameterized in cylindrical coordinates $(\rho ,\theta ,z)$ by
$$\theta \,\in \,[0,2\pi], \,\,|z|\leq 1 \quad \mapsto \quad (1, \theta, z).$$
\item Two discs, the top and bottom of each cylinder, parameterized by
$$\rho \, \in \, [0,1], \,\, \theta \,\in \,[0, 2\pi] \quad \mapsto \quad (\rho,\theta,\pm 1).$$
\end{enumerate}

Observe that the local stable manifold $W^s_\loc(\sigma_1)$ of the equilibrium $\sigma_1$ is the disk in $V_{\sigma_1}$ given by $\{(\rho,\theta,z)\colon 0\leq \rho \leq 1, \, 0\leq \theta \leq 2\pi, \, z=0\}$. The local stable manifold $W^s_\loc(\sigma_2)$ of $\sigma_2$ is the $z$-axis in $V_{\sigma_2}$; the local unstable manifold $W^u_\loc(\sigma_2)$ of $\sigma_2$ is parameterized by $z=0$ in $V_{\sigma_2}$.

\begin{figure}[ht]
\begin{center}
\includegraphics[height=6cm]{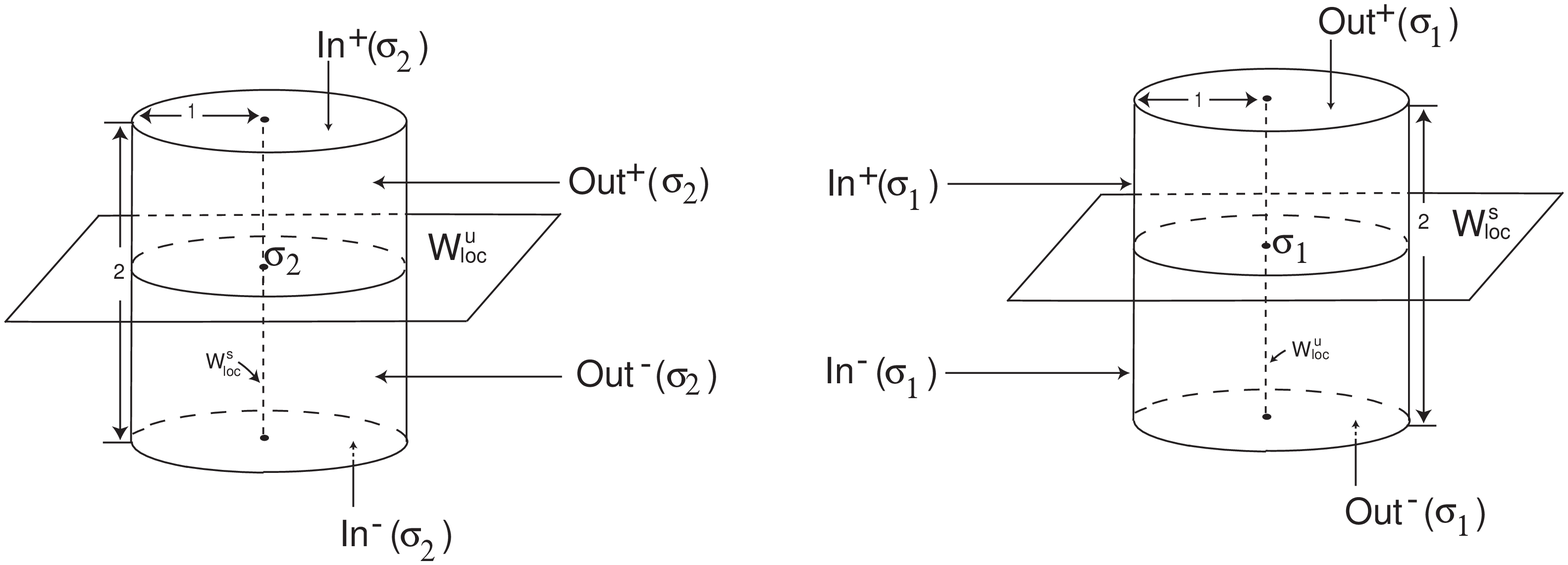}
\end{center}
\caption{\small Local cylindrical coordinates in $V_{\sigma_2}$ and $V_{\sigma_1}$, near $\sigma_2$ and $\sigma_1$. Left: The flow enters the cylinder $V_{\sigma_2}$ crossing transversely the top/bottom $\In\,(\sigma_2)$ and leaves it transversely through the wall $\Out\,(\sigma_2)$. Right: The flow enters the cylinder $V_{\sigma_1}$ transverse to the wall $\In\,(\sigma_1)$ and leaves it transversely across the top/bottom $\Out\,(\sigma_1)$.}
\label{crosssections1}
\end{figure}

In $V_{\sigma_1}$, we will use the following notation:
\begin{eqnarray*}
\Out^+(\sigma_1) &:=& \text{top of $V_{\sigma_1}$, that is, $\left\{(\rho,\, \theta,\, 1) \colon 0 \leq \rho < 1, \,\, \theta \,\in \,[0, 2\pi]\,\right\}$.}\\
\Out^-(\sigma_1) &:=& \text{bottom of $V_{\sigma_1}$, that is, $\left\{(\rho,\, \theta,\, -1) \colon 0 \leq \rho < 1, \,\, \theta \,\in \,[0, 2\pi]\,\right\}$.}\\
\In^+(\sigma_1) &:=& \text{upper part of the cylinder wall of $V_{\sigma_1}$, that is, the set} \\
&& \left\{(1,\, \theta, \,z) \colon \theta \,\in \,[0, 2\pi], \,\, 0 < z < 1\,\right\}. \\
\In^-(\sigma_1) &:=& \text{lower part of the cylinder wall of $V_{\sigma_1}$, that is, the set} \\
&& \left\{(1, \,\theta, \,z) \colon \theta \,\in \,[0, 2\pi], \,\, -1 < z < 0\,\right\} .\\
\In\,(\sigma_1) &:=&  \In^+(\sigma_1) \cup \In^-(\sigma_1); \text{ its elements enter $V_{\sigma_1}$ in positive time.}\\
\Out\,(\sigma_1) &:=& \Out^+(\sigma_1) \cup \Out^-(\sigma_1); \text{ its elements leave $V_{\sigma_1}$ in positive time.}\\
\end{eqnarray*}
By construction, the flow is transverse to these sections. Similarly, we define the cross sections for the linearization around $\sigma_2$. We will refer to $\In\,(\sigma_2)$, the top and the bottom of $V_{\sigma_2}$, consisting of points that enter $V_{\sigma_2}$ in positive time; $\Out\,(\sigma_2)$, the cylinder wall of $V_{\sigma_2}$, made of points that go inside $V_{\sigma_2}$ in negative time, with $\Out^+(\sigma_2)$ denoting its upper part and $\Out^-(\sigma_2)$ its lower part. 
Notice that $[\sigma_1\rightarrow\sigma_2]$ connects points with $z>0$ in $V_{\sigma_1}$ (respectively $z<0$) to points with $z>0$ (respectively $z<0$) in $V_{\sigma_2}$.

Let
\begin{equation}\label{eq:cross-sections}
\Sigma_1 = \Out\,(\sigma_1) \qquad \text{ and } \qquad \Sigma_2 = \Out\,(\sigma_2)
\end{equation}
be two relative-open cross sections transverse to the connections $[\sigma_1 \rightarrow \sigma_2]^{ext}$ and the invariant sphere $\Fix\,(\Gamma_2)$, respectively. Geometrically each connected component of $\Sigma_1 \setminus [\sigma_1 \rightarrow \sigma_2]$ is a punctured disk; on the other hand, $\Sigma_2 \setminus \Fix\,(\Gamma_2)$ is an annulus, as illustrated in Figure~\ref{cross-sections1}.

\begin{figure}[ht]
\begin{center}
\includegraphics[height=7cm]{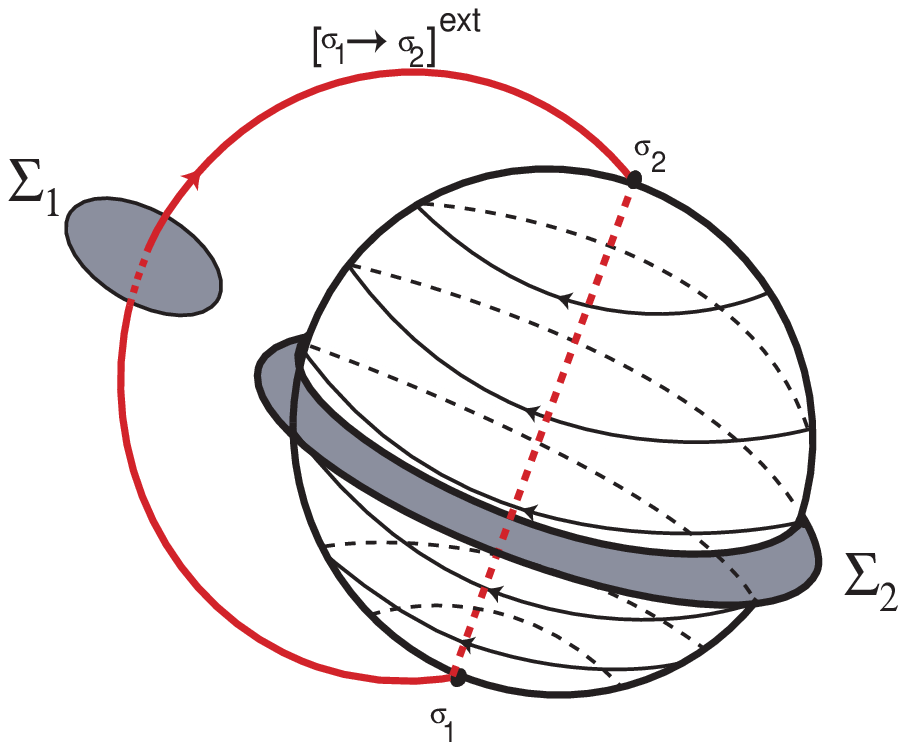}
\end{center}
\caption{$\Sigma_1$ and $\Sigma_2$.} 
\label{cross-sections1}
\end{figure}

\subsection{Local maps near the saddle-foci}\label{sse:local saddle-foci}

As the dynamics sends points with $z>0$ in $V_{\sigma_1}$ (respectively $z<0$) to points with $z>0$ (respectively $z<0$) in $V_{\sigma_2}$, and is symmetric with respect to the two-dimensional sphere $\Fix\,(\Gamma_2)$, it is enough to analyze the orbits of initial conditions $(\rho, \theta, z)$ in the invariant upper part $z > 0$.

In cylindrical coordinates $(\rho ,\theta ,z)$ the linearization of the dynamics at $\sigma_1 $ and $\sigma_2 $ is specified by the following equations:
\begin{equation}\label{local map v}
\left\{\begin{array}{l}
\dot{\rho}=-C_{1 }\rho \\
\dot{\theta}=\omega_1 \\
\dot{z}=E_1 z
\end{array}
\right.
\qquad\text{and}\qquad
\left\{\begin{array}{l}
\dot{\rho}=E_{2 }\rho \\
\dot{\theta}= \omega_2 \\
\dot{z}=-C_{2 }z .
\end{array}
\right.
\end{equation}
Therefore, a trajectory whose initial condition is $(1,\theta,z) \in \In^+(\sigma_1)$ arrives at $\Out^+(\sigma_1)$ after a period of time equal to
\begin{equation}\label{time1}
-\,\frac{\log z}{E_1}.
\end{equation}
Moreover, the trajectory of such a point $(1,\theta,z)$ leaves $V_{\sigma_1}$ through $\Out^+(\sigma_1)$ at the point of $\Out^+(\sigma_1)$ given by
\begin{equation}\label{local_v}
\Phi^+_{1}\left(1,\theta,z\right)=\left(z^{\delta_1} + \mathcal{S}_1(1,\,\theta, \,z),\,\,\left[-\,\frac{\omega_1}{E_1}\log z + \theta + \mathcal{S}_2(1,\,\theta, \,z)\right] \,\,mod\,2\pi, \,\,1\right)
\end{equation}
where $\mathcal{S}_1$, $\mathcal{S}_2$ are smooth functions such that, for $i = 1,2$ and every $k, \ell \in \NN_0$,
\begin{equation}\label{diff_res}
\left| \frac{\partial^{k+\ell}\,\mathcal{S}_i(1, \,\theta, \,z)}{\partial \theta^k \,\partial z^\ell}
\right| 
= \mathcal{O}(z^{\delta_1 \, (1 + \varepsilon)})
\end{equation}
for some positive constant $\varepsilon < 1$ (cf. \cite{Deng1989}). Dually, a point $(\rho, \theta, 1) \in \In^+(\sigma_2)$ leaves $V_{\sigma_2}$ through $\Out^+(\sigma_2)$ after a period of time equal to
\begin{equation}
\label{time2}
-\,\frac{\log \rho}{E_2}
\end{equation}
at the point of $\Out^+(\sigma_2)$
\begin{equation}\label{local_w}
\Phi^+_{2}\left(\rho, \theta, 1\right)=\left(1, \,\,\left[-\,\frac{\omega_2}{E_2}\log \rho + \theta + \mathcal{T}_1(\rho, \,\theta, \,1)\right]\,\,mod\,2\pi,\,\,\rho^{\delta_2} + \mathcal{T}_2(\rho, \,\theta, \,1)\right)
\end{equation}
where $\mathcal{T}_1$, $\mathcal{T}_2$ satisfy a condition similar to (\ref{diff_res}). The maps $\mathcal{S}_1$, $\mathcal{S}_2$, $\mathcal{T}_1$ and $\mathcal{T}_2$ represent asymptotically small terms that vanish when either $\rho$ or $z$ goes to zero.

\subsection{The first transition}\label{sse:1st transition}

Points in $\Out^+(\sigma_1)$ near $W^u(\sigma_1)$ are mapped into $\In^+(\sigma_2)$ along a flow-box around the connection $[\sigma_1\rightarrow\sigma_2]^{ext}$. Up to a change of coordinates (see \cite[\S 3.1]{Takens1994}, \cite[\S 1.1]{BD2003}), corresponding to homotheties and rotations which leave invariant the local expressions of the flows in neighborhoods of the equilibria, we may assume that the transition map
$$\Psi^+_{1 \rightarrow 2}: \Out^+(\sigma_1) \rightarrow \In^+(\sigma_2)$$
is the identity map. This assumption is compatible with the hypothesis (P3), and admissible due to the fact that the equilibria $\sigma_1$ and $\sigma_2$ have the same chirality (cf. (P5)).

Denote by $\eta^+$ the map
$$\eta^+=\Phi^+_{2} \circ \Psi^+_{1 \rightarrow 2} \circ \Phi^+_{1 }: \In^+(\sigma_1) \rightarrow \Out^+(\sigma_2).$$
Up to higher order terms (we will replace by dots), from \eqref{local_v} and \eqref{local_w} we infer that the expression of $\eta^+$ in local coordinates is given by
\begin{equation}\label{eq:eta}
\eta^+(1,\theta, z)=\left(1,\,\, \left[-\,\frac{\omega_1E_2+\delta_1\omega_2E_1}{E_1E_2} \log z + \theta\right] \,\,mod\,2\pi, \,\,z^{\delta_1\delta_2} \right) + (\ldots)
\end{equation}
or, in a simpler notation (see Section~\ref{se:constants}),
\begin{equation}
\eta^+(1,\theta, z)=\left(1,\,\, (-K \log z + \theta) \,\,mod\,2\pi, \,\,z^{\delta} \right) + (\ldots).
\end{equation}

\subsection{The second transition}\label{sse:2nd transition}

Up to the previously mentioned change of coordinates, the action of the linear part of the transition map
$$\Psi^+_{2 \rightarrow 1}: \Out^+(\sigma_2) \rightarrow \In^+(\sigma_1)$$
may be seen (cf. \cite{Bykov2000}) as a composition of transpositions (either rotations of the coordinate axes or homothetic changes of scales). In what follows, we will assume that
\begin{equation}\label{transition2}
\Psi^+_{2 \rightarrow 1} (1,\theta, z) = \left(1, \,\ \frac{1}{a}\,\theta , \,\, a\,z\right)
\end{equation}
for some $0 < a < 1$.

\subsection{The Poincar\'e map}\label{sse:1st return map}
The $f-$solution of every initial condition $(1, \theta, z) \in \mathcal{B}(\mathcal{A}_f) \cap \In^+(\sigma_1)$ returns to $\In^+(\sigma_1)$, thus defining a first return map
\begin{equation}\label{first return}
\mathcal{P}^+ = \Psi^+_{2 \rightarrow 1} \circ \eta^+: \mathcal{B}(\mathcal{A}_f) \cap \In^+(\sigma_1) \rightarrow \In^+(\sigma_1)
\end{equation}
which is as smooth as the vector field $f$ and acts as follows:
$$\mathcal{P}^+(1, \theta, z) = \left(1,\,\,\frac{1}{a}\,\left[(-K \log z + \theta) \,\,mod\,2\pi\right],\,\, a\,z^\delta \right)  + (\ldots).$$
In an analogous way, we define the return map $\mathcal{P}^-$ from $ \In^-(\sigma_1)$ to itself.

\section{Hitting times}\label{se:hitting-times}

In what follows, we will obtain estimates of the amount of time a trajectory spends between consecutive isolating blocks. We are assuming, as done by Gaunersdorfer \cite{Gau1992} and Takens in \cite{Takens1994}, that the transitions from $\Out^+(\sigma_2)$ to $\In^+(\sigma_1)$ and from $\Out^+(\sigma_1)$ to $\In^+(\sigma_2)$ are instantaneous.

Starting with the initial condition $(1, \theta_0, z_0) \in \Out^+(\sigma_2)$ at the time $t_0$, its orbit hits $\Out^+(\sigma_1)$ after a time interval equal to
\begin{equation}\label{eq:t1}
t_1 = -\,\frac{1}{E_1} \log \,(a\,z_0)
\end{equation}
at the point
$$(\rho_1, \, \theta_1, \,1) = \Phi^+_{1} \circ \Psi^+_{2 \rightarrow 1} (1,\,\theta_0, \,z_0) = \Phi^+_{1} \left(1, \,\, \frac{1}{a}\,\theta_0, \,\, a\,z_0\right)$$
that is,
\begin{eqnarray*}
&&(\rho_1, \, \theta_1, \,1) =  \\
&=& \left(\left(a\,z_0\right)^{\delta_1} + \mathcal{S}_1(1, \,\frac{1}{a}\,\theta_0, \,a\,z_0), \,\,\left[-\, \frac{\omega_1}{E_1} \log \left(a\,z_0\right)+\frac{1}{a}\,\theta_0 + \mathcal{S}_2(1, \,\frac{1}{a}\,\theta_0,\, \,\,a\,z_0)\right]\,\,mod\,2\pi, \,\,1\right).
\end{eqnarray*}
Then, the orbit goes instantaneously to $\In^+(\sigma_2)$ and proceeds to $\Out^+(\sigma_2)$, hitting the point
$$(1, \,\theta_2, \, z_2)=\Phi^+_{2}(\rho_1, \, \theta_1, \,1) \, \in \, \Out^+(\sigma_2)$$
whose second and third coordinates are (see \eqref{local_w})
\begin{eqnarray*}
\theta_2 &=& \left(-\,\frac{\omega_2}{E_2}\log \rho_1 + \theta_1 + \mathcal{T}_1(\rho_1, \,\theta_1, \,1)\right)\,\,mod\,2\pi \\
&=& \left[-\,\frac{\omega_2}{E_2}\log \left(\left(a\,z_0\right)^{\delta_1} + \mathcal{S}_1(1, \,\frac{1}{a}\,\theta_0, \,a\,z_0)\right) + \theta_1 + \mathcal{T}_1(\rho_1, \,\theta_1, \,1)\right]\,\,mod\,2\pi \\ \\
z_2 &=& \rho_1^{\delta_2} + \mathcal{T}_2(\rho_1, \,\theta_1, \,1) \\
&=& \left(\left(a\,z_0\right)^{\delta_1} + \mathcal{S}_1(1, \,\frac{1}{a}\,\theta_0, \,a\,z_0)\right)^{\delta_2} + \mathcal{T}_2(\rho_1, \,\theta_1, \,1)
\end{eqnarray*}
and spending in the whole path a time equal to
\begin{equation*}
t_2 = t_1 + \left(-\,\frac{1}{E_2} \log \rho_1\right)
\end{equation*}
that is,
\begin{equation}\label{eq:t2}
t_2 = t_1 -\,\frac{1}{E_2} \log \left(\left(a\,z_0\right)^{\delta_1} + \mathcal{S}_1(1, \,\frac{1}{a}\,\theta_0, \,a\,z_0)\right) = t_1 - \frac{\delta_1}{E_2} \log \,(a\,z_0) + \mathcal{O}((az_0)^{\delta_1 \,\varepsilon}).
\end{equation}
And so on for the other time values.\\

We may assume, without loss of generality, that $t_0=0$: this amounts to consider the solutions starting at $\Sigma_2$, a valid step since every orbit in $\mathcal{B}(\mathcal{A}_f)$ eventually crosses this cylinder. Notice also that, as the transition maps are linear with diagonal matrices, up to higher order terms 
the sequence of times $(t_j)_{j \in \mathbb{N}}$ depend essentially on the cylindrical coordinates $\rho$ and $z$. Figure~\ref{cross-sections2} summarizes the previous information.

\begin{figure}[ht]
\begin{center}
\includegraphics[height=7cm]{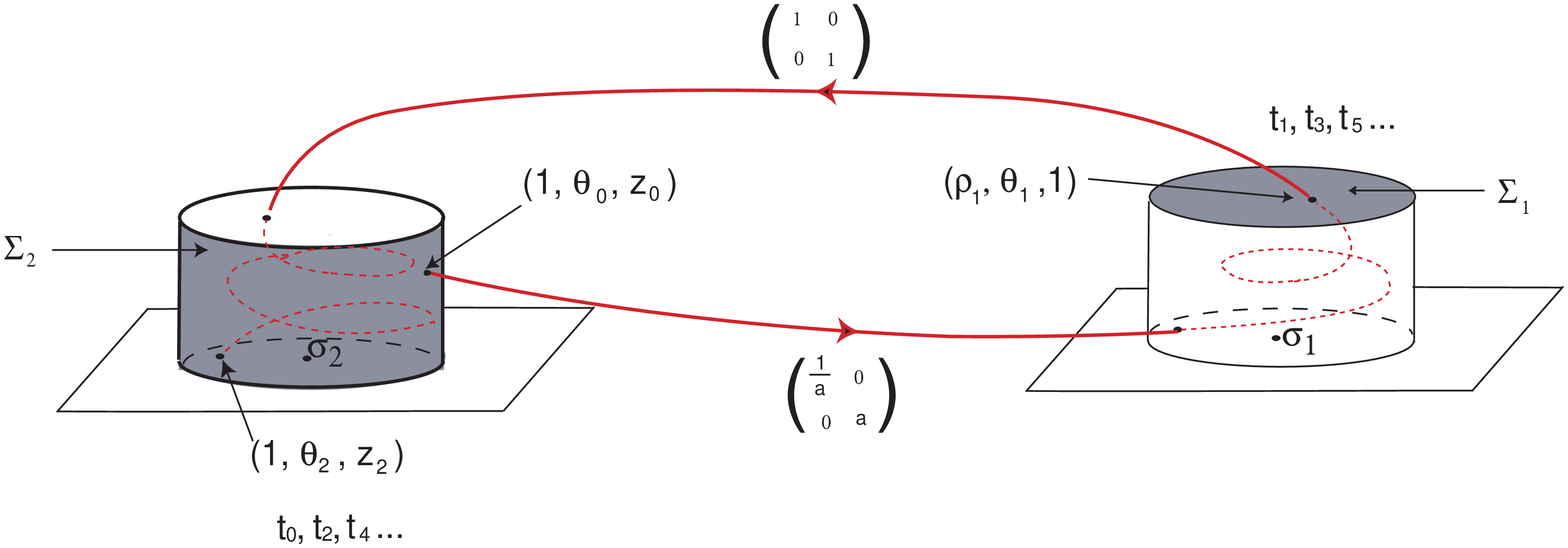}
\end{center}
\caption{\small The sequence $\left(t_i\right)_{i \, \in \, \NN_0}$ of hitting times for a point $(1, \,\theta_0,\,z_0) \in \Out^+(\sigma_2)$.}
\label{cross-sections2}
\end{figure}

\section{The invariants}\label{se:invariants}

In this section, we will examine how the hitting times sequences generate the set of invariants we will use. Starting with a point $P=(1,\theta_0,z_0) \in \Out^+(\sigma_2)$ at the time $t_0=0$, consider the sequences of times $\left(t_j\right)_{j\,\in\,\mathbb{N}}$ constructed in the previous subsection and define for each $i \in \NN_0=\NN\cup\{0\}$
\begin{equation}\label{eq:times}
\left\{\begin{array}{l}
P_{2i}:=\varphi(t_{2i}, \,\,P)=(1,\,\,\theta_{2i}, \,\,z_{2i}) \,\,\in\,\, \Out^+(\sigma_2) \\ \\

P_{2i+1}:=\varphi(t_{2i+1}, \,\,P)=(\rho_{2i+1},\,\,\theta_{2i+1},\,\,1) \in \Out^+(\sigma_1).
\end{array}
\right.
\end{equation}
We divide the trajectory $\left(\varphi(t,P)\right)_{t \,\in\, \RR^+_0}$ in periods of time corresponding to its sojourns in either $V_{\sigma_1}$ (that is, the differences ${t_{2i+1}-t_{2i}}$ for $i \in \NN_0$) or inside $V_{\sigma_2}$ (that is, ${t_{2i+2}-t_{2i+1}}$ for $i \in \NN_0$) during its travel paths that begin and end at $\Out^+(\sigma_2)$.

\begin{lemma}\label{le:calculus} Let $P=(1,\theta_0,z_0) \in \Out^+(\sigma_2)$ and take the defined sequence $(t_j)_{j\,\in\, \NN_0}$. Then:
\begin{enumerate}
\item $\lim_{i \to +\infty} \,({t_{2i+1}-t_{2i}})-\gamma_2\,({t_{2i}-t_{2i-1}}) = -\,\frac{1}{E_1}\log a.$
\bigbreak
\item $\lim_{i \to +\infty} \,({t_{2i+2}-t_{2i+1}})-\gamma_1\,({t_{2i+1}-t_{2i}}) = 0.$
\bigbreak
\item $\lim_{i \to +\infty}\, ({t_{2i+2}-t_{2i}})- \delta\,(t_{2i}-t_{2i-2}) = -\tau \,\log a.$
\bigbreak
\item\label{6.2(4)} For each $i \in \NN$, there exists $R_i \in \RR$ such that $\,\sum_{i=1}^\infty i\,|R_i|<\infty$ and
$$({t_{2i+2}-t_{2i}})- \delta\,(t_{2i}-t_{2i-2})= -\tau \,\log a + R_i.$$
\end{enumerate}
\end{lemma}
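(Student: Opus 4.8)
The strategy is to encode the whole orbit in a single scalar recursion for $L_i:=\log(a\,z_{2i})$, where $P_{2i}=(1,\theta_{2i},z_{2i})\in\Out^+(\sigma_2)$, and then read off the four statements by elementary algebra with the eigenvalue ratios while keeping track of the higher order remainders coming from $\mathcal{S}_1,\mathcal{S}_2,\mathcal{T}_1,\mathcal{T}_2$. For $i\in\NN_0$ write $s_i:=t_{2i+1}-t_{2i}$ for the time spent in $V_{\sigma_1}$ and $u_i:=t_{2i+2}-t_{2i+1}$ for the time spent in $V_{\sigma_2}$, so that $t_{2i+2}-t_{2i}=s_i+u_i=:T_i$. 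Since $\Psi^+_{2\rightarrow 1}$ in \eqref{transition2} sends $P_{2i}$ to $(1,\tfrac1a\theta_{2i},a\,z_{2i})\in\In^+(\sigma_1)$, formula \eqref{time1} gives the exact identity $s_i=-\tfrac{1}{E_1}\log(a\,z_{2i})=-\tfrac{L_i}{E_1}$. By \eqref{local_v} the exit point in $\Out^+(\sigma_1)$ has first coordinate $\rho_{2i+1}=(a\,z_{2i})^{\delta_1}+\mathcal{S}_1(1,\tfrac1a\theta_{2i},a\,z_{2i})$; pushing it through the identity $\Psi^+_{1\rightarrow 2}$ and using \eqref{time2} and \eqref{local_w} we get $u_i=-\tfrac{1}{E_2}\log\rho_{2i+1}=-\tfrac{\delta_1}{E_2}L_i+\alpha_i$ and $z_{2i+2}=\rho_{2i+1}^{\delta_2}+\mathcal{T}_2(\rho_{2i+1},\theta_{2i+1},1)$, where by \eqref{diff_res} (and the analogous bound for $\mathcal{T}_2$) the term $\alpha_i=-\tfrac{1}{E_2}\log\!\bigl(1+(a\,z_{2i})^{-\delta_1}\mathcal{S}_1\bigr)$ is $\mathcal{O}\bigl((a\,z_{2i})^{\delta_1\varepsilon}\bigr)=\mathcal{O}(e^{\delta_1\varepsilon L_i})$. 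Taking logarithms in the formula for $z_{2i+2}$ yields the recursion $L_{i+1}=\log a+\delta\,L_i+\beta_i$, in which $\beta_i$ is a combination of the previous remainders and is again $\mathcal{O}(e^{\delta_1\varepsilon L_i})$.

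Next I would record the escape of the orbit from a small neighborhood of $\Fix\,(\Gamma_2)$. Because $P\in\mathcal{B}(\mathcal{A}_f)$, the forward orbit converges to $\mathcal{A}_f$, so the successive returns to $\Out^+(\sigma_2)$ accumulate on $\Fix\,(\Gamma_2)\cap\Out^+(\sigma_2)=\{z=0\}$; hence $z_{2i}\to0$ and $L_i\to-\infty$. Feeding this into the recursion, once $|L_i|$ is large enough one gets $L_{i+1}<\delta\,L_i<0$, so $|L_i|\geq\delta^{\,i-i_0}|L_{i_0}|$ for some index $i_0$; thus $|L_i|$ grows geometrically and every quantity of the form $\mathcal{O}(e^{c\,L_i})$ with $c>0$ decays doubly exponentially, in particular $\sum_i i\,e^{c\,L_i}<\infty$.

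With this in hand, items (1)--(3) follow by substituting $s_i$, $u_i$ and the recursion and using the identities $\tfrac{\delta_1}{E_2}=\tfrac{\gamma_1}{E_1}$, $\gamma_1\gamma_2=\delta$ and $\tau=\tfrac{1}{E_1}+\tfrac{\delta_1}{E_2}$, all immediate from the definitions in Section~\ref{se:constants}. Concretely: for (2), $u_i-\gamma_1 s_i=\bigl(\tfrac{\gamma_1}{E_1}-\tfrac{\delta_1}{E_2}\bigr)L_i+\alpha_i=\alpha_i\to0$; for (1), writing $L_i$ through the recursion, $s_i-\gamma_2 u_{i-1}=-\tfrac{1}{E_1}\log a+\bigl(\tfrac{\gamma_2\delta_1}{E_2}-\tfrac{\delta}{E_1}\bigr)L_{i-1}+\mathcal{O}(e^{\delta_1\varepsilon L_{i-1}})\to-\tfrac{1}{E_1}\log a$; for (3), $T_i=s_i+u_i=-\tau\,L_i+\alpha_i$, and the recursion gives $T_i-\delta\,T_{i-1}=-\tau\log a+R_i$ with $R_i=\alpha_i-\delta\,\alpha_{i-1}-\tau\,\beta_{i-1}\to0$. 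This last line is exactly (4): each of $\alpha_i$, $\alpha_{i-1}$, $\beta_{i-1}$ is $\mathcal{O}(e^{\delta_1\varepsilon L_{i-1}})$, so $|R_i|=\mathcal{O}(e^{\delta_1\varepsilon L_{i-1}})$, and the second paragraph gives $\sum_{i\geq1} i\,|R_i|<\infty$.

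The main obstacle is the error bookkeeping underlying the first paragraph: one must check that the remainders $\mathcal{S}_j,\mathcal{T}_j$ survive both the logarithms and the compositions of the local maps \eqref{local_v}, \eqref{local_w} with the linear transitions, keeping the implied $\mathcal{O}$-constants uniform in $\theta$ (which is where the derivative estimates \eqref{diff_res} are used) and uniform along the whole orbit, so that the single exponent $\delta_1\varepsilon$ governs every remainder. The second, smaller difficulty is to extract from the mere membership $P\in\mathcal{B}(\mathcal{A}_f)$ the quantitative geometric escape $|L_i|\geq\delta^{\,i-i_0}|L_{i_0}|$ needed to make the $i$-weighted series in (4) converge; this combines the asymptotic stability of $\mathcal{A}_f$ with the expanding factor $\delta>1$ in the recursion.
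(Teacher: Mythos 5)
Your proof is correct and follows essentially the same route as the paper's: both arguments rest on the explicit sojourn-time formulas \eqref{time1} and \eqref{time2}, the local maps \eqref{local_v} and \eqref{local_w} with Deng's remainder bounds \eqref{diff_res}, and the decay of the $z$-coordinates along returns guaranteed by asymptotic stability. The paper performs the cancellations item by item by expanding the logarithms directly, whereas you package the same computation into the single recursion $L_{i+1}=\log a+\delta\,L_i+\beta_i$ for $L_i=\log(a\,z_{2i})$; this is only a change of bookkeeping for items (1)--(3), but for item (4) your derivation of the geometric growth $|L_i|\geq\delta^{\,i-i_0}\,|L_{i_0}|$ from the recursion itself makes the summability of $\sum_i i\,|R_i|$ more self-contained than the paper's root-test argument, which asserts the bound $(a\,z_{2i})^{\delta_1\varepsilon}<\mathcal{O}((a\,z_0)^i)$ without deriving it.
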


\medskip

\begin{proof} Using \eqref{eq:t1} and \eqref{eq:t2}, we may write
$$({t_{2i+1}-t_{2i}})-\gamma_2\,({t_{2i}-t_{2i-1}}) = -\,\frac{1}{E_1}\log\,\left(a\,\rho_{2i-1}^{\delta_2} + \mathcal{O}\left(\rho_{2i-1}^{\delta_2(1 + \varepsilon)}\right)\right) + \frac{\gamma_2}{E_2}\log\,(\rho_{2i-1}).$$
Summoning the fact that
\begin{eqnarray*}
\log \left(a \, \rho_{2i-1}^{\delta_2} + \mathcal{O}\left(\rho_{2i-1}^{\delta_2(1 + \varepsilon)}\right)\right) &=& \log \left(a \, \rho_{2i-1}^{\delta_2} \left(1 + \frac{\mathcal{O}\left(\rho_{2i-1}^{\delta_2 (1 + \varepsilon)}\right)}{a \, \rho_{2i-1}^{\delta_2}}\right)\right) \\
&=& \log a + \delta_2\,\log \rho_{2i-1} + \log \left(1 + \mathcal{O}\left(\rho_{2i-1}^{\delta_2 \,\varepsilon}\right)\right) \\
&=& \log a + \delta_2\,\log \rho_{2i-1} + \mathcal{O}\left(\rho_{2i-1}^{\delta_2 \,\varepsilon}\right)
\end{eqnarray*}
we deduce that
\begin{eqnarray*}
({t_{2i+1}-t_{2i}})-\gamma_2\,({t_{2i}-t_{2i-1}}) &=& -\,\frac{1}{E_1} \log a - \frac{C_2}{E_1 E_2} \log \,(\rho_{2i-1}) + \frac{C_2}{E_1 E_2} \log \,(\rho_{2i-1}) + \mathcal{O}\left(\rho_{2i-1}^{\delta_2 \, \varepsilon}\right) \\
&=& -\,\frac{1}{E_1} \log a + \mathcal{O}\left(\rho_{2i-1}^{\delta_2 \, \varepsilon}\right).
\end{eqnarray*}
Therefore, as $\lim_{i \,\to \,+\infty}\,\rho_{2i-1}^{\delta_2 \, \varepsilon}=0$ due to the asymptotic stability of $\mathcal{A}_f$, we get
$$\lim_{i \to +\infty} \,({t_{2i+1}-t_{2i}})-\gamma_2\,({t_{2i}-t_{2i-1}}) = -\,\frac{1}{E_1}\log a.$$

Similarly,
\begin{eqnarray}\label{eq:gamma1}
&&({t_{2i+2}-t_{2i+1}})-\gamma_1\,({t_{2i+1}-t_{2i}}) = -\,\frac{1}{E_2} \log \left((a\,z_{2i})^{\delta_1} +  \mathcal{O}\left((a\,z_{2i})^{\delta_1 (1 + \varepsilon)}\right)\right) - \frac{\gamma_1}{E_1} \log \,(a \,z_{2i})= \nonumber\\
&=& - \frac{C_1}{E_1 E_2} \log \,(a\,z_{2i}) + \frac{C_1}{E_1 E_2} \log \,(a\,z_{2i}) + \mathcal{O}\left((a\,z_{2i})^{\delta_1 \,\varepsilon}\right)
\end{eqnarray}
thus
\begin{equation*}
\lim_{i \to +\infty} \,({t_{2i+2}-t_{2i+1}})-\gamma_1\,({t_{2i+1}-t_{2i}}) = 0.
\end{equation*}

\medskip

Up to higher order terms (we will replace by dots), we have
\begin{eqnarray*}
{t_{2i+2}-t_{2i}} &=& -\,\frac{1}{E_1} \log \,(a\,z_{2i}) -\,\frac{1}{E_2} \log \,(\rho_{2i+1})+\ldots\\
&=& -\,\frac{1}{E_1} \log a  -\,\frac{1}{E_1} \log \left(\rho_{2i-1}^{\delta_2}\right) -\,\frac{1}{E_2} \log \left((a\, z_{2i})^{\delta_1}\right) + \ldots \\
&=& -\,\frac{1}{E_1} \log a  -\,\frac{{\delta_2}}{E_1} \log \left((a\, z_{2i-2})^{\delta_1}\right) -\,\frac{{\delta_1}}{E_2} \log  a\, -\,\frac{{\delta_1}}{E_2} \log \left(z_{2i}\right) + \ldots\\
&=& -\,\frac{1}{E_1} \log a  -\,\frac{{\delta}}{E_1} \log \left(a\, z_{2i-2}\right) -\,\frac{{\delta_1}}{E_2} \log  a\,  -\,\frac{{\delta_1}}{E_2} \log \left((\rho_{2i-1})^{\delta_2}\right) + \ldots\\
&=& -\,\frac{1}{E_1} \log a  -\,\frac{{\delta}}{E_1} \log  a\,   -\,\frac{{\delta}}{E_1} \log \left(z_{2i-2}\right) -\,\frac{{\delta_1}}{E_2} \log  a\,  -\,\frac{{\delta}}{E_2} \log \left((a\,z_{2i-2})^{\delta_1}\right) + \ldots\\
&=& -\,\frac{1}{E_1} \log a  -\,\frac{{\delta}}{E_1} \log  a\,   -\,\frac{{\delta}}{E_1} \log \left(z_{2i-2}\right) -\,\frac{{\delta_1}}{E_2} \log  a\,  -\,\frac{{\delta\delta_1}}{E_2} \log \left((a\,z_{2i-2})\right) + \ldots\\
&=& -\left[\frac{1}{E_1}+\frac{{\delta}}{E_1}  + \frac{{\delta_1}}{E_2} + \frac{{\delta\delta_1}}{E_2}\right] \log a  - \left[\frac{{\delta}}{E_1}+\frac{{\delta\delta_1}}{E_2}\right] \log \left(z_{2i-2}\right) + \ldots.\\
\end{eqnarray*}

On the other hand,
\begin{eqnarray*}
{t_{2i}-t_{2i-2}}&=& -\,\frac{1}{E_1} \log \,(a\,z_{2i-2}) -\,\frac{1}{E_2} \log \,(\rho_{2i-1}) + \ldots\\
&=& -\,\frac{1}{E_1} \log a  -\,\frac{1}{E_1} \log \left(z_{2i-2}\right) -\,\frac{1}{E_2} \log \left((a\,z_{2i-2})^{\delta_1}\right) + \ldots \\
&=& -\left[\frac{1}{E_1}+ \frac{{\delta_1}}{E_2}\right] \log a  -  \left[\frac{{\delta_1}}{E_2}  + \frac{1}{E_1}\right]\log \left(z_{2i-2}\right) + \ldots.
\end{eqnarray*}

Therefore,
$$\lim_{i \to +\infty} \,({t_{2i+2}-t_{2i}})-\delta\,({t_{2i}-t_{2i-2}})  = -\left[\frac{1}{E_1}+\frac{\delta_1}{E_2} \right] \log a = -\tau\, \log a.$$

\medskip

For each $i\in \NN_0$, let
$$T_i = t_{2i+2}-t_{2i}.$$
Then,
$$T_{i}-\delta \,T_{i-1} = ({t_{2i+2}-t_{2i}})- \delta\,(t_{2i}-t_{2i-2}) = -\tau \,\log a + \mathcal{O}((a\,z_{2i})^{\delta_1\,\varepsilon}).$$
So, if we define
$$i \in \NN\quad  \mapsto \quad R_i = \mathcal{O}((a\,z_{2i})^{\delta_1 \,\varepsilon})$$
then, as
\begin{eqnarray*}
\sqrt[i]{i\,\,|R_i|} &=& \sqrt[i]{i\,\,\left|\mathcal{O}((a\,z_{2i})^{\delta_1 \,\varepsilon})\right|} = \sqrt[i]{i}\,\,\left|\mathcal{O}\left((a\,z_{2i})^{\frac{\delta_1 \,\varepsilon}{i}}\right)\right| \\
\lim_{i \to +\infty} \,\sqrt[i]{i} &=& 1 \quad \quad \text{and} \quad \quad 0 < (a\,z_{2i})^{\delta_1 \,\varepsilon}  <  \mathcal{O}((a\,z_0)^i)
\end{eqnarray*}
we obtain
$$\limsup_{i \to +\infty} \,\sqrt[i]{i\,\,|R_i|} \leq a\,z_0 < 1.$$
Hence, using the root test, we conclude that the series $\sum_{i=1}^\infty \,i\,\,|R_i|$ converges.
\end{proof}

A straightforward computation gives additional information on the speed of convergence of the previous sequences and the connection between the return times sequences and the invariant $\omega_1 + \gamma_1\,\omega_2$.

\begin{corollary}\label{cor:speed-of-convergence} $\,$
\begin{enumerate}
\item $\lim_{i \to +\infty} \,\frac{t_{2i+2}\,-\,t_{2i+1}}{t_{2i+1}\,-\,t_{2i}} = \gamma_1$.
\bigbreak
\item $\lim_{i \to +\infty} \,\frac{t_{2i+1}\,-\,t_{2i}}{t_{2i}\,-\,t_{2i-1}} = \gamma_2$.
\bigbreak
\item $\lim_{i \to +\infty} \,\frac{t_{2i+2}\,-\,t_{2i}}{t_{2i}\,-\,t_{2i-2}} = \delta$.
\bigbreak
\item $\lim_{i \to +\infty}\, \frac{\omega_1\,\left(t_{2i+1}\,-\,t_{2i}\right) \,+ \,\omega_2\,\left(t_{2i+2}\,-\,t_{2i+1}\right)}{t_{2i+2}\,-\,t_{2i}} = \left(\omega_1 + \gamma_1\,\omega_2\right)\,\frac{1}{\gamma_1 \,+\, 1}.$
\end{enumerate}
\end{corollary}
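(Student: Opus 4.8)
The plan is to obtain all four limits as immediate consequences of Lemma~\ref{le:calculus}, combined with one auxiliary observation: each sojourn time $t_{2i+1}-t_{2i}$ and $t_{2i}-t_{2i-1}$ (hence also $t_{2i+2}-t_{2i}$ and $t_{2i}-t_{2i-2}$, being sums of two of these) diverges to $+\infty$ as $i\to+\infty$. This is read off directly from the expressions in Section~\ref{se:hitting-times}: $t_{2i+1}-t_{2i}=-\frac{1}{E_1}\log(a\,z_{2i})$ and $t_{2i}-t_{2i-1}=-\frac{1}{E_2}\log\rho_{2i-1}+(\ldots)$, while $z_{2i}\to 0$ and $\rho_{2i-1}\to 0$ by the asymptotic stability of $\mathcal{A}_f$ — the very fact already invoked in the proof of Lemma~\ref{le:calculus}. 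Thus each of these denominators is eventually positive and unbounded.

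For item (1), I would use the algebraic identity
$$\frac{t_{2i+2}-t_{2i+1}}{t_{2i+1}-t_{2i}}-\gamma_1=\frac{(t_{2i+2}-t_{2i+1})-\gamma_1\,(t_{2i+1}-t_{2i})}{t_{2i+1}-t_{2i}},$$
whose numerator tends to $0$ by Lemma~\ref{le:calculus}(2) and whose denominator tends to $+\infty$; hence the quotient tends to $0$. Items (2) and (3) follow from the same identity with the roles played by Lemma~\ref{le:calculus}(1) and (3): now the numerators converge to the constants $-\frac{1}{E_1}\log a$ and $-\tau\log a$ respectively, and dividing a convergent sequence by the denominators $t_{2i}-t_{2i-1}\to+\infty$ and $t_{2i}-t_{2i-2}\to+\infty$ again yields limit $0$, so the ratios converge to $\gamma_2$ and $\delta$.

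For item (4), I would put $A_i=t_{2i+1}-t_{2i}$ and $B_i=t_{2i+2}-t_{2i+1}$, so that $t_{2i+2}-t_{2i}=A_i+B_i$ and, dividing numerator and denominator by $A_i$,
$$\frac{\omega_1\,(t_{2i+1}-t_{2i})+\omega_2\,(t_{2i+2}-t_{2i+1})}{t_{2i+2}-t_{2i}}=\frac{\omega_1+\omega_2\,(B_i/A_i)}{1+B_i/A_i}.$$
By item (1) we already know $B_i/A_i\to\gamma_1$, hence the right-hand side converges to $\dfrac{\omega_1+\gamma_1\,\omega_2}{1+\gamma_1}$, which is the asserted value.

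There is essentially no genuine obstacle: the corollary merely repackages Lemma~\ref{le:calculus}. The one point that deserves a line of justification is the divergence of the sojourn times, since it is precisely this that converts the \emph{additive} asymptotics of Lemma~\ref{le:calculus} (differences of the form ``$a_i-c\to 0$'' or ``$a_i\to c$'') into the \emph{multiplicative} statements here (ratios tending to $\gamma_1,\gamma_2,\delta$); without it one could not conclude that a bounded numerator over the sojourn time vanishes in the limit.
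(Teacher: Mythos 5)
Your proposal is correct and follows essentially the same route as the paper: the paper declares items (1)--(3) ``immediate'' consequences of Lemma~\ref{le:calculus} and proves item (4) by exactly your factorization through $t_{2i+1}-t_{2i}$ together with item (1). The only difference is that you make explicit the auxiliary fact the paper leaves unstated --- that the sojourn times diverge, which is what upgrades the additive asymptotics of the Lemma to the stated ratios --- and this is a worthwhile clarification rather than a deviation.
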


\medskip

\begin{proof}
The first three items are immediate. Concerning the last one, it is enough to notice that
\begin{eqnarray*}
&&\frac{\omega_1\,\left(t_{2i+1}-t_{2i}\right) \,+ \,\omega_2\,\left(t_{2i+2}-t_{2i+1}\right)}{\left(t_{2i+2}-t_{2i+1}\right) \,+\, \left(t_{2i+1}-t_{2i}\right)} =\\
&=&\frac{\omega_1\,\left(t_{2i+1}-t_{2i}\right) \,+ \,\omega_2\,\left(t_{2i+2}-t_{2i+1}\right)}{t_{2i+1}-t_{2i}} \times \frac{t_{2i+1}-t_{2i}}{\left(t_{2i+2}-t_{2i+1}\right) \,+\, \left(t_{2i+1}-t_{2i}\right)}
 \end{eqnarray*}
and use the equalities
$$\lim_{i \to +\infty} \,\frac{t_{2i+2}\,-\,t_{2i+1}}{t_{2i+1}\,-\,t_{2i}} = \gamma_1 \quad \quad \text{and} \quad \quad \lim_{i \to +\infty} \,\frac{t_{2i+1}-t_{2i}}{\left(t_{2i+2}-t_{2i+1}\right) \,+\, \left(t_{2i+1}-t_{2i}\right)} = \frac{1}{\gamma_1 + 1}.$$
\end{proof}

\section{Proof of Proposition~\ref{thm:A}} The reasoning presented in this section follows Takens' proof of Theorem 1 in \cite{Takens1994}; a proof is included for the reader's convenience. Let $P \in \mathcal{B}(\mathcal{A}_f)$ and $\left(t_i\right)_{i \,\in \, \mathbb{N}_0}$ its hitting times sequence. Taking into account the estimates obtained in Section~\ref{se:invariants}, consider $\ell \in \mathbb{N}_0$ big enough so that the remainders in the computations to prove Lemma~\ref{le:calculus} are arbitrarily small. Then, for every continuous map $G\colon \EU \to \mathbb{R}$ we have
\begin{eqnarray*}
&&\frac{1}{t_{2\ell + 2i}- t_{2\ell}} \int_{t_{2\ell}}^{t_{2\ell + 2i}} G(\varphi(t,\,P))\,dt = \frac{1}{t_{2\ell + 2i}- t_{2\ell}} \,\sum_{k=0}^{i-1} \left[\int_{t_{2\ell + 2k}}^{t_{2\ell + 2k + 2}} G(\varphi(t,\,P))\,dt \right] = \\
&=& \frac{1}{t_{2\ell + 2i} - t_{2\ell}} \,\sum_{k=0}^{i-1} \left[\int_{t_{2\ell + 2k}}^{t_{2\ell + 2k + 1}} G(\varphi(t,\,P))\,dt + \int_{t_{2\ell + 2k + 1}}^{t_{2\ell + 2k + 2}} G(\varphi(t,\,P))\,dt \right]
\end{eqnarray*}
and so, applying Lemma~\ref{le:calculus}(2), we get
\begin{eqnarray*}
&&\lim_{i\to +\infty} \,\frac{1}{t_{2\ell + 2i} - t_{2\ell}} \int_{t_{2\ell}}^{t_{2\ell + 2i}} G(\varphi(t,\,P))\,dt = \\
&=&\lim_{i\to +\infty} \,\frac{1}{t_{2\ell + 2i}- t_{2\ell}} \sum_{k=0}^{i-1} \,\left[(t_{2\ell + 2k + 1}-t_{2\ell + 2k})\, G(\sigma_1) + (t_{2\ell + 2k + 2}-t_{2\ell + 2k + 1}) \,G(\sigma_2) \right]\\
&=& \lim_{i\to +\infty} \,\frac{1}{t_{2\ell + 2i}- t_{2\ell}} \sum_{k=0}^{i-1} \,\left[(t_{2\ell+2k+1}-t_{2\ell+2k})\, G(\sigma_1) + \gamma_1\,(t_{2\ell+2k+1}-t_{2\ell+2k})\,G(\sigma_2) \right]\\
&=& \lim_{i\to +\infty} \,\frac{1}{t_{2\ell + 2i}- t_{2\ell}} \sum_{k=0}^{i-1} \,\left(t_{2\ell+2k+1}-t_{2\ell+2k}\right)  \, \left[G(\sigma_1) + \gamma_1 \, G(\sigma_2)\right]\\
&=& \left[G(\sigma_1) + \gamma_1\, G(\sigma_2)\right]\,\,\lim_{i\to +\infty} \,\frac{1}{t_{2\ell + 2i}- t_{2\ell}} \sum_{k=0}^{i-1} \,\left(t_{2\ell+2k+1}-t_{2\ell+2k}\right).
\end{eqnarray*}
Additionally,
$$\left[G(\sigma_1) + \gamma_1\, G(\sigma_2)\right]\,\,\lim_{i\to +\infty} \,\frac{1}{t_{2\ell + 2i}- t_{2\ell}} \sum_{k=0}^{i-1} \,\left(t_{2\ell+2k+1}-t_{2\ell+2k}\right) = \frac{1}{1 + \gamma_1}\,G(\sigma_1) + \frac{\gamma_1}{1 + \gamma_1}\, G(\sigma_2)$$
because, during the period of time $[0, t_{2\ell + 2i}]$, the ratio between the time spent in the linearizing neighborhood of $\sigma_1$ an the total $t_{2\ell + 2i} - t_{2\ell}$ approaches $\frac{1}{1+\gamma_1}$ as $i$ goes to $+ \infty$. \\

We observe that we may replace $P$ by either $\sigma_1$ or $\sigma_2$ in the previous computation due to the fact that its orbit alternately approaches one of these two equilibria as the time goes to infinity, and $G$ is continuous. Moreover, while changing $(t_{2\ell + 2k + 2}-t_{2\ell+2k+1})$ into $\gamma_1\,(t_{2\ell+2k+1}-t_{2\ell+2k})$, we are omitting the higher order terms explicit in \eqref{eq:gamma1}; yet, this does not affect the limit of the averages $\left(\frac{1}{t_{2\ell+2i}-t_{2\ell}} \sum_{k=0}^{i-1} \,\left(t_{2\ell+2k+1}-t_{2\ell+2k}\right)\right)_{i \in \NN}$ since the remainder $\mathcal{O}\left((a\,z_{2\ell+2i})^{\delta_1 \,\varepsilon}\right)$ goes exponentially fast to zero (cf. \eqref{local map v}), so the corresponding series converges; besides, $\lim_{i\to +\infty} \,t_{2\ell+2i} -t_{2\ell} = +\infty$ (cf. Section~\ref{se:hitting-times}).\\

An analogous computation yields the second part of Proposition~\ref{thm:A}:
$$\lim_{i\to +\infty} \,\frac{1}{t_{2\ell+2i+1}-t_{2\ell}} \int_{t_{2\ell}}^{t_{2\ell+2i+1}} G(\varphi(t,\,P))\,dt = \frac{\gamma_2}{1 + \gamma_2} \,G(\sigma_1) + \frac{1}{1 + \gamma_2} \,G(\sigma_2).$$

Notice now that
\begin{eqnarray*}
&&\left(\frac{\gamma_2}{1 + \gamma_2} \,G(\sigma_1) + \frac{1}{1 + \gamma_2} \,G(\sigma_2)\right) - \left(\frac{1}{1 + \gamma_1} \,G(\sigma_1) + \frac{\gamma_1}{1 + \gamma_1} \,G(\sigma_2)\right) = \\
&=& \left(\frac{1 - \gamma_1\,\gamma_2}{(1 + \gamma_1)(1+\gamma_2)}\right) \, \left[G(\sigma_2) - G(\sigma_1)\right].
\end{eqnarray*}
So, if we choose a continuous map $G\colon \EU \to \mathbb{R}$ such that $G(\sigma_1) \neq G(\sigma_2)$ (whose existence is guaranteed by Urysohn's Lemma on the compact metric space $\EU$), then, as by assumption we have $\gamma_1\,\gamma_2 > 1$ (see Section~\ref{se:constants}), we conclude that
$$\frac{\gamma_2}{1 + \gamma_2} \,G(\sigma_1) + \frac{1}{1 + \gamma_2} \,G(\sigma_2) \neq \frac{1}{1 + \gamma_1} \,G(\sigma_1) + \frac{\gamma_1}{1 + \gamma_1} \,G(\sigma_2)$$
and therefore
$$\lim_{i\to +\infty} \,\frac{1}{t_{2\ell+2i}-t_{2\ell}} \int_{t_{2\ell}}^{t_{2\ell+2i}} G(\varphi(t,\,P))\,dt \neq \lim_{i\to +\infty} \,\frac{1}{t_{2\ell+2i+1}-t_{2\ell}} \int_{t_{2\ell}}^{t_{2\ell+2i+1}} G(\varphi(t,\,P))\,dt$$
confirming that the sequence of Birkhoff averages of $G$ along the orbit of $P$ does not converge.

\begin{remark}
As $\lim_{i \to +\infty} \,\frac{t_{2i+2}\,-\,t_{2i}}{t_{2i}\,-\,t_{2i-2}}=\delta$, the historic behavior in $\mathcal{B}(\mathcal{A}_f)$ is of type B1 according to the labeling proposed in \cite{JNY2009}.
\end{remark}

\section{Proof of Theorem~\ref{thm:B}}
We already know from \cite{P1978}, \cite{Takens1994}, \cite{Duf2001} and \cite{SSimo1989} that the numbers $\gamma_1$, $\gamma_2$, $\tau\,\log a$ and $\omega_1 + \gamma_1 \,\omega_2$ are invariants under topological conjugacy. We are left to prove that these invariants form a complete set. The argument we will present was suggested by Takens' ideas in \cite{Takens1994}, although we had to clarify several significant details of \cite{Takens1994} and make a few adjustments.

Let $f$ and $g$ be vector fields in $\mathfrak{X}^r_{\text{Byk}}(\EU)$ with invariants
$$\gamma_1, \quad \gamma_2, \quad \omega_1+\gamma_1\omega_2, \quad \tau \, \log a$$
and
$$\overline{\gamma}_1, \quad \overline{\gamma}_2, \quad \overline{\omega}_1+\overline{\gamma}_1 \, \overline{\omega}_2, \quad \overline{\tau} \,\log \overline{a}$$
respectively, and such that
\begin{equation}\label{eq:five-invariants}
\gamma_1 = \overline{\gamma}_1,\quad \gamma_2 = \overline{\gamma}_2, \quad \omega_1+\gamma_1\omega_2 = \overline{\omega}_1+\gamma_1\overline{\omega}_2, \quad \tau \, \log a = \overline{\tau} \,\log \overline{a}.
\end{equation}
We will show that these numbers enable us to construct a conjugacy between $f$ and $g$ in a neighborhood of the respective heteroclinic cycles $\mathcal{A}_f$ and $\mathcal{A}_g$. Notice that for a conjugacy between $f$ and $g$ to exist it is necessary that the conjugated orbits have hitting times sequences, with respect to fixed cross sections, that are uniformly close. With this information in mind, we will start associating to $f$ and any point $P$ in a fixed cross section $\Sigma$ another point $\widetilde{P}$ whose $f-$trajectory has a sequence of hitting times (at a possibly different but close cross section $\widetilde{\Sigma}$) which is determined by and uniformly close to the hitting times sequence of $P$, but is easier to work with (see, for instance, the computation \eqref{eq:theta}). This is done by slightly adjusting the cross section $\Sigma$ (which a topological conjugacy needs not to preserve) using the flow along the orbit of $P$; and then to find an injective and continuous way of recovering the orbits from the hitting times sequences. Afterwards, repeating this procedure with $g$ we find a point $Q$ whose $g-$trajectory has hitting times at some cross section equal to the ones of $\widetilde{P}$. Due to the fact that the invariants of $f$ and $g$ are the same, the map that sends $P$ to $Q$ is the aimed conjugacy. In the next subsections we will explain in detail this construction.

\subsection{A sequence of time intervals}\label{ssse:time1}
Fix $P = (\rho_0, \,\theta_0,\,z_0) \in \mathcal{B}(\mathcal{A}_f)$ and let $\left(t_i\right)_{i \,\in\, \NN_0}$ be the times sequence defined in \eqref{eq:times}. We start defining, for each $i \in \NN_0$, a finite family of numbers
$$\widetilde{T}_0^{\,\,(i)},\,\, \widetilde{T}_1^{\,\,(i)},\,\,  \widetilde{T}_2^{\,\,(i)},\, \ldots,\, \widetilde{T}_i^{\,\,(i)}$$
satisfying the following properties
\begin{eqnarray}\label{eq:Ttil-ij}
\widetilde{T}_i^{\,\,(i)} &=& T_i = t_{2i+2}-t_{2i} \nonumber \\
\widetilde{T}_j^{\,\,(i)} - \delta \,\widetilde{T}_{j-1}^{\,\,(i)} &=& -\tau\, \log a, \quad \quad \forall \, j \in \{1, 2, \ldots, i\}.
\end{eqnarray}
\medskip
For instance, as $T_i = t_{2i+2}-t_{2i}$ and $t_0=0$, we deduce from the previous equalities that
\begin{eqnarray*}
\widetilde{T}_0^{\,\,(0)} &=& T_0 = t_2 - t_0 = t_2 \\
\widetilde{T}_1^{\,\,(1)} &=& T_1 = t_4 - t_2, \qquad \widetilde{T}_0^{\,\,(1)} = \frac{\widetilde{T}_1^{\,\,(1)} + \tau \,\log a}{\delta} = \frac{T_1 + \tau \,\log a}{\delta}\\
\widetilde{T}_2^{\,\,(2)} &=& T_2 = t_6 - t_4, \qquad \widetilde{T}_1^{\,\,(2)} = \frac{T_2 + \tau \,\log a}{\delta}, \qquad \widetilde{T}_0^{\,\,(2)} = \frac{T_2 + (1 + \delta)\, \tau \,\log a}{\delta^2}\\
\widetilde{T}_3^{\,\,(3)} &=& T_3 = t_8 - t_6, \qquad  \widetilde{T}_2^{\,\,(3)} = \frac{T_3 + \tau\, \log a}{\delta}, \qquad \widetilde{T}_1^{\,\,(3)} = \frac{T_3 + (1 + \delta)\, \tau\, \log a}{\delta^2} \\
&&\qquad \qquad \qquad \qquad \qquad  \qquad \qquad \qquad \qquad \,\,\,\, \widetilde{T}_0^{\,\,(3)} = \frac{T_3 + (1 + \delta + \delta^2)\,\tau \log a}{\delta^3}.
\end{eqnarray*}
By finite induction, it is straightforward to generalize these examples and show that, for every $i \, \in \, \NN$,
\begin{equation}\label{eq:T0i}
\widetilde{T}_0^{\,\,(i)}=\frac{T_i + (\sum_{j=0}^{i-1} \,\delta^j) \, \tau\log a}{\delta^i}
\end{equation}
Therefore,

\begin{lemma}\label{Tempos1} For every $i \, \in \, \NN_0$, we have $\,\widetilde{T}_0^{\,\,(i+1)} - \,\widetilde{T}_0^{\,\,(i)} = \frac{R_{i+1}}{\delta^{i+1}}.$
\end{lemma}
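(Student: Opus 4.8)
The plan is to derive the identity directly from the closed-form expression \eqref{eq:T0i} for $\widetilde{T}_0^{\,\,(i)}$ together with the recursion $T_{i+1} - \delta\, T_i = -\tau\log a + R_{i+1}$ furnished by Lemma~\ref{le:calculus}(\ref{6.2(4)}). First I would observe that, with the usual convention that the empty sum equals zero, formula \eqref{eq:T0i} also holds at $i=0$, where it simply reads $\widetilde{T}_0^{\,\,(0)} = T_0$, matching the defining relation in \eqref{eq:Ttil-ij}. Hence it suffices to compare \eqref{eq:T0i} at the indices $i$ and $i+1$.

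Next I would substitute $T_{i+1} = \delta\, T_i - \tau\log a + R_{i+1}$ into the numerator of $\widetilde{T}_0^{\,\,(i+1)}$, obtaining
$$\widetilde{T}_0^{\,\,(i+1)} = \frac{\delta\, T_i - \tau\log a + R_{i+1} + \Big(\sum_{j=0}^{i}\delta^j\Big)\tau\log a}{\delta^{i+1}}.$$
The one algebraic point is the identity $\sum_{j=0}^{i}\delta^j = 1 + \delta\sum_{j=0}^{i-1}\delta^j$, which makes the two stray $\tau\log a$ terms cancel and leaves $\delta$ times precisely the numerator of $\widetilde{T}_0^{\,\,(i)}$, plus the single remainder $R_{i+1}$:
$$\widetilde{T}_0^{\,\,(i+1)} = \frac{\delta\Big(T_i + \big(\textstyle\sum_{j=0}^{i-1}\delta^j\big)\tau\log a\Big) + R_{i+1}}{\delta^{i+1}} = \frac{T_i + \big(\textstyle\sum_{j=0}^{i-1}\delta^j\big)\tau\log a}{\delta^{i}} + \frac{R_{i+1}}{\delta^{i+1}}.$$
Recognising the first summand as $\widetilde{T}_0^{\,\,(i)}$ via \eqref{eq:T0i} completes the argument.

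There is essentially no obstacle here: the statement is a purely formal consequence of the explicit formula \eqref{eq:T0i} and item (\ref{6.2(4)}) of Lemma~\ref{le:calculus}, and the only thing to watch is the bookkeeping of the geometric sums together with the base case $i=0$, where the claim reads $\widetilde{T}_0^{\,\,(1)} - T_0 = (T_1 - \delta\, T_0 + \tau\log a)/\delta = R_1/\delta$, using again Lemma~\ref{le:calculus}(\ref{6.2(4)}). If one preferred not to invoke \eqref{eq:T0i}, the same identity follows by a short induction on $i$ straight from the defining relations \eqref{eq:Ttil-ij}, but routing the proof through the closed formula is the cleanest option.
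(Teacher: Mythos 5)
Your proposal is correct and follows essentially the same route as the paper: both derive the identity from the closed formula \eqref{eq:T0i} combined with the relation $T_{i+1}-\delta T_i = -\tau\log a + R_{i+1}$ of Lemma~\ref{le:calculus}(\ref{6.2(4)}), differing only in whether one subtracts the two expressions first or substitutes the recursion first. The bookkeeping of the geometric sums and the base case are handled correctly.
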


\begin{proof} This is immediate after \eqref{eq:T0i} and Lemma~\ref{le:calculus}(4):
\begin{eqnarray*}
\widetilde{T}_0^{\,\,(i+1)} - \widetilde{T}_0^{\,\,(i)} & = & \frac{T_{i+1} + \tau \,\log a \left(\sum_{j=0}^{i}\delta^j\right)}{\delta^{i+1}}\,-\,\frac{T_i + \tau\, \log a \left(\sum_{j=0}^{i-1}\delta^j\right)}{\delta^i}\\
& = &  \frac{T_{i+1}-\delta T_i}{\delta^{i+1}} + \frac{\tau \,\log a \left(\sum_{j=0}^{i}\delta^j - \sum_{j=1}^{i}\delta^j\right)}{\delta^{i+1}} \\
& = &  \frac{T_{i+1}-\delta T_i}{\delta^{i+1}} + \frac{\tau\, \log a}{\delta^{i+1}}  =  \frac{R_{i+1}}{\delta^{i+1}}.
\end{eqnarray*}
\end{proof}

Now, Lemma~\ref{Tempos1} yields
\begin{eqnarray*}
\widetilde{T}_0^{\,\,(i)}& = & \widetilde{T}_0^{\,\,(i-1)} + \frac{R_i}{\delta^i} \\
& = &  \widetilde{T}_0^{\,\,(i-2)} +   \frac{R_{i-1}}{\delta^{i-1}} +\frac{R_i}{\delta^i} \\
& = &  \widetilde{T}_0^{\,\,(i-3)} +   \frac{R_{i-2}}{\delta^{i-2}}  + \frac{R_{i-1}}{\delta^{i-1}} + \frac{R_i}{\delta^i} \\
& \vdots & \\
& = &  \widetilde{T}_0^{\,\,(0)} +   \sum_{j=1}^i \frac{R_j}{\delta^{j}}
\end{eqnarray*}
As $\delta>1$, the series $\sum_{j=1}^\infty \frac{R_j}{\delta^{j}} $ converges, and so the sequence $\left(\widetilde{T}_0^{\,\,(i)}\right)_{i \,\in\, \NN_0}$ converges. Denote its limit by
\begin{equation}\label{T0}
\widetilde{T}_0:= \lim_{i \to + \infty}\, \widetilde{T}_0^{\,\,(i)} = T_0^{(0)} + \sum_{j=1}^\infty \frac{R_j}{\delta^{j}} = T_0 +   \sum_{j=1}^\infty \frac{R_j}{\delta^{j}}.
\end{equation}

\begin{remark}\label{re:importante}
A small change on the value of $t_2$ (or on any other time value $t_i$) with the same order of magnitude of $\mathcal{O}(z_0^c)$, for a positive constant $c$, does not alter the limit $\widetilde{T}_0$, because such a perturbation only requires us to consider a slightly different value for each $R_i$.
\end{remark}

\subsection{A sequence of adjusted hitting times}\label{ssse:time2}

For $i \geq 1$, consider the sequence $(\widetilde{T}_i)_{i \,\in\, \NN_0}$ satisfying
\begin{equation}\label{eq:Ttil}
\widetilde{T}_i = \delta \, \widetilde{T}_{i-1} - \tau\, \log a \qquad \forall \, i \, \in \, \NN
\end{equation}
where $\widetilde{T}_0$ was computed in \eqref{T0}. For example,
\begin{eqnarray*}
\widetilde{T}_1 &=& \delta \, \widetilde{T}_{0} - \tau\, \log a \\
\widetilde{T}_2 &=& \delta^2 \, \widetilde{T}_{0} - (1+\delta)\,\tau\, \log a \\
\widetilde{T}_3 &=& \delta^3 \, \widetilde{T}_{0} - (1+\delta+\delta^2)\,\tau\, \log a.
\end{eqnarray*}

\begin{lemma}\label{le:convergence} The series $\sum_{i=0}^{+\infty} \, (T_i - \widetilde{T}_{i})$ converges.
\end{lemma}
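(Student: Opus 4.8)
The plan is to rewrite the general term $T_i - \widetilde{T}_i$ as a telescoping-type expression whose size is controlled by the remainders $R_j$ from Lemma~\ref{le:calculus}(4), and then to invoke the absolute convergence of $\sum_j i\,|R_i|$ (equivalently the convergence established in Subsection~\ref{ssse:time1}) to sum the resulting double series. The starting observation is that $T_i$ and $\widetilde{T}_i$ satisfy almost the same recursion: from Lemma~\ref{le:calculus}(4) we have $T_i = \delta\,T_{i-1} - \tau\log a + R_i$, while \eqref{eq:Ttil} gives $\widetilde{T}_i = \delta\,\widetilde{T}_{i-1} - \tau\log a$. Subtracting, the constant $-\tau\log a$ cancels and we obtain the clean linear recursion
\begin{equation*}
T_i - \widetilde{T}_i = \delta\,(T_{i-1} - \widetilde{T}_{i-1}) + R_i.
\end{equation*}

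Next I would solve this recursion explicitly. Iterating and using the base case $T_0 - \widetilde{T}_0 = -\sum_{j=1}^\infty \frac{R_j}{\delta^j}$ (which is exactly \eqref{T0}, since $\widetilde{T}_0 = T_0 + \sum_{j\ge 1} R_j/\delta^j$), one gets
\begin{equation*}
T_i - \widetilde{T}_i = \delta^i\,(T_0 - \widetilde{T}_0) + \sum_{k=1}^{i} \delta^{\,i-k}\,R_k
= -\,\delta^i \sum_{j=1}^\infty \frac{R_j}{\delta^{j}} + \sum_{k=1}^{i} \delta^{\,i-k} R_k
= -\,\delta^i \sum_{j=i+1}^\infty \frac{R_j}{\delta^{j}} = -\sum_{j=i+1}^\infty \frac{R_j}{\delta^{\,j-i}}.
\end{equation*}
So the tail $T_i - \widetilde{T}_i$ is itself a geometrically weighted tail of the $R_j$'s, which already shows $T_i - \widetilde{T}_i \to 0$; the point now is to sum these over $i$.

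Finally I would estimate $\sum_{i=0}^\infty |T_i - \widetilde{T}_i| \le \sum_{i=0}^\infty \sum_{j=i+1}^\infty \delta^{-(j-i)}\,|R_j|$ and interchange the order of summation (all terms nonnegative, so Tonelli applies): for fixed $j$ the coefficient is $\sum_{i=0}^{j-1} \delta^{-(j-i)} = \sum_{m=1}^{j} \delta^{-m} < \frac{1}{\delta - 1}$ since $\delta > 1$. Hence $\sum_{i=0}^\infty |T_i - \widetilde{T}_i| \le \frac{1}{\delta-1}\sum_{j=1}^\infty |R_j| < \infty$, because $\sum_j |R_j| \le \sum_j j\,|R_j| < \infty$ by Lemma~\ref{le:calculus}(4). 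This proves absolute (hence ordinary) convergence of $\sum_{i\ge 0}(T_i - \widetilde{T}_i)$. The only mildly delicate point is the justification of the interchange of the two sums and of the closed form for the tail $T_0 - \widetilde T_0$; both are routine given the absolute convergence of $\sum R_j/\delta^j$ already recorded in Subsection~\ref{ssse:time1}, so I do not expect a genuine obstacle here — the lemma is essentially a bookkeeping consequence of Lemma~\ref{le:calculus}(4) and the definition \eqref{T0} of $\widetilde T_0$.
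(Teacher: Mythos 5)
Your proposal is correct, and it follows the same skeleton as the paper's proof: bound $|T_i-\widetilde T_i|$ by a tail sum of the remainders $R_j$, then interchange the order of summation. The difference is that the paper simply asserts ``by construction'' the cruder estimate $|T_i-\widetilde T_i|\le\sum_{j=i+1}^{\infty}|R_j|$, interchanges the sums, and therefore needs the full strength of Lemma~\ref{le:calculus}(4), namely $\sum_i i\,|R_i|<\infty$ (each $|R_j|$ is counted $j$ times). You instead solve the linear recursion $T_i-\widetilde T_i=\delta\,(T_{i-1}-\widetilde T_{i-1})+R_i$ explicitly with the base value $T_0-\widetilde T_0=-\sum_{j\ge1}R_j/\delta^j$ from \eqref{T0}, obtaining the exact identity $T_i-\widetilde T_i=-\sum_{j=i+1}^{\infty}R_j\,\delta^{-(j-i)}$. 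This both supplies the justification the paper leaves implicit (since $\delta>1$, the weights $\delta^{-(j-i)}$ are at most $1$, recovering the paper's bound) and yields a sharper conclusion: after Tonelli the coefficient of $|R_j|$ is $\sum_{m=1}^{j}\delta^{-m}<\tfrac{1}{\delta-1}$, so you only need $\sum_j|R_j|<\infty$ rather than $\sum_j j\,|R_j|<\infty$. All steps (the recursion, its solution, the interchange of nonnegative double sums) check out, so there is no gap; your argument is a slightly more careful and quantitatively stronger version of the paper's.
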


\begin{proof} We notice that, by construction, for every $i \, \in\,  \NN_0$ we have $|T_i -\widetilde{T}_i| \leq \sum_{j=i+1}^\infty |R_j|$. Consequently, for every $\ell \, \in\,  \NN_0$, we get
\begin{eqnarray*}
\sum_{i=0}^\ell \,|T_i - \widetilde{T}_{i}| & \leq & \sum_{i=0}^\ell \,\sum_{j=i+1}^\infty |R_j| \\
&=& \left(|R_1|+ |R_2|+ |R_3| \ldots \right)+ \left(|R_2|+ |R_3|+|R_4| \ldots \right)+ \left(|R_3|+ |R_4|+\ldots \right) + \ldots\\
&=& |R_1|+ 2\,|R_2|+ 3\,|R_3| + \ldots + i\,|R_i|+ \ldots \\
&=& \sum_{i=1}^\ell \,i\,|R_i|.
\end{eqnarray*}
As, according to Lemma~\ref{le:calculus}(4), the series $\sum_{i=1}^{+\infty} \,i\,|R_i|$ converges, the proof is complete. Notice that this result implies that $\lim_{i \to +\infty} (T_i - \widetilde{T}_i) = 0.$
\end{proof}

Lemma~\ref{le:convergence} ensures that we may take a sequence $\left(\widetilde{t}_{2i}\right)_{i \, \in \, \NN_0}$ of positive real numbers such that
\begin{eqnarray}\label{eq:times-even-til}
\widetilde{t}_{0} &=& 0  \nonumber\\
\widetilde{T}_i &=& \widetilde{t}_{2i+2}- \widetilde{t}_{2i} \nonumber\\
\lim_{i\, \to \, + \infty}\, (t_{2i} - \widetilde{t}_{2i}) &=& 0.
\end{eqnarray}
Moreover, by construction (see \eqref{eq:Ttil}) we have
\begin{equation}\label{eq:tau log a}
(\widetilde{t}_{2i+2}- \widetilde{t}_{2i}) - \delta\, (\widetilde{t}_{2i}- \widetilde{t}_{2i-2}) = - \tau\,\log a.
\end{equation}

Afterwards, we take a sequence $\left(\widetilde{t}_{2i+1}\right)_{i \, \in \, \NN_0}$ satisfying, for every $i \, \in \, \NN_0$,
\begin{equation}\label{eq:times-odd-til}
\widetilde{t}_{2i+2}- \widetilde{t}_{2i+1} = \gamma_1 \, (\widetilde{t}_{2i+1} - \widetilde{t}_{2i}).
\end{equation}
Therefore,
\begin{eqnarray}\label{eq:omega1 omega2}
\frac{\omega_1\,\left(\widetilde{t}_{2i+1}\,-\,\widetilde{t}_{2i}\right) \,+ \,\omega_2\,\left(\widetilde{t}_{2i+2}\,-\,\widetilde{t}_{2i+1}\right)}{\left(\widetilde{t}_{2i+2}\,-\,\widetilde{t}_{2i}\right)} &=& \frac{\omega_1\,\left(\widetilde{t}_{2i+1}\,-\,\widetilde{t}_{2i}\right) \,+ \,\gamma_1\,\omega_2\,\left(\widetilde{t}_{2i+1}\,-\,\widetilde{t}_{2i}\right)}{\left(\widetilde{t}_{2i+2}\,-\,\widetilde{t}_{2i+1}\right) + \left(\widetilde{t}_{2i+1}\,-\,\widetilde{t}_{2i}\right)} \nonumber \\
&=&\frac{\omega_1 + \gamma_1\,\omega_2}{\gamma_1 \,+\, 1}.
\end{eqnarray}

\bigskip

\begin{lemma}\label{le:prop-t-til} The following equalities hold:
\begin{enumerate}
\item $\lim_{i\, \to \, + \infty}\, (t_{2i+1} - \widetilde{t}_{2i+1}) = 0.$ \\
\item $\lim_{i \to +\infty} \,(\widetilde{t}_{2i+1} - \widetilde{t}_{2i}) - \gamma_2\,(\widetilde{t}_{2i} - \widetilde{t}_{2i-1}) = -\,\frac{1}{E_1}\log a.$\\
\end{enumerate}
\end{lemma}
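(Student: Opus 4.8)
The plan is to prove both parts of Lemma~\ref{le:prop-t-til} by transferring the known asymptotic relations between the genuine hitting times $(t_i)$ (established in Lemma~\ref{le:calculus} and its setup) to the adjusted sequences $(\widetilde{t}_i)$, using the three convergence facts already available: $\lim_{i}(t_{2i}-\widetilde{t}_{2i})=0$ from \eqref{eq:times-even-til}, the defining odd relation \eqref{eq:times-odd-til}, and Corollary~\ref{cor:speed-of-convergence}(1)--(2) together with Lemma~\ref{le:calculus}(2). For item (1), I would observe that, by definition, $\widetilde{t}_{2i+1}$ is the unique point splitting the interval $[\widetilde{t}_{2i},\widetilde{t}_{2i+2}]$ so that $\widetilde{t}_{2i+2}-\widetilde{t}_{2i+1}=\gamma_1(\widetilde{t}_{2i+1}-\widetilde{t}_{2i})$, hence
\begin{equation*}
\widetilde{t}_{2i+1}-\widetilde{t}_{2i} = \frac{1}{1+\gamma_1}\,(\widetilde{t}_{2i+2}-\widetilde{t}_{2i}) = \frac{1}{1+\gamma_1}\,\widetilde{T}_i.
\end{equation*}
On the other hand, from Lemma~\ref{le:calculus}(2) the genuine increments satisfy $(t_{2i+2}-t_{2i+1})-\gamma_1(t_{2i+1}-t_{2i})\to 0$, so $t_{2i+1}-t_{2i} = \frac{1}{1+\gamma_1}(t_{2i+2}-t_{2i}) + o(1) = \frac{1}{1+\gamma_1}T_i + o(1)$. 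Subtracting, and using $T_i-\widetilde{T}_i\to 0$ (the remark after Lemma~\ref{le:convergence}) together with $t_{2i}-\widetilde{t}_{2i}\to 0$, gives
\begin{equation*}
t_{2i+1}-\widetilde{t}_{2i+1} = (t_{2i+1}-t_{2i}) - (\widetilde{t}_{2i+1}-\widetilde{t}_{2i}) + (t_{2i}-\widetilde{t}_{2i}) = \tfrac{1}{1+\gamma_1}(T_i-\widetilde{T}_i) + o(1) + o(1) \longrightarrow 0.
\end{equation*}

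For item (2), the strategy is to rewrite $(\widetilde{t}_{2i+1}-\widetilde{t}_{2i})-\gamma_2(\widetilde{t}_{2i}-\widetilde{t}_{2i-1})$ entirely in terms of the $\widetilde{T}$-increments and then invoke the relation \eqref{eq:tau log a}, namely $\widetilde{T}_i-\delta\widetilde{T}_{i-1}=-\tau\log a$. Concretely, $\widetilde{t}_{2i+1}-\widetilde{t}_{2i}=\frac{1}{1+\gamma_1}\widetilde{T}_i$, while the ``incoming'' part $\widetilde{t}_{2i}-\widetilde{t}_{2i-1}$ equals $\widetilde{t}_{2i}-\widetilde{t}_{2i-2}-(\widetilde{t}_{2i-1}-\widetilde{t}_{2i-2}) = \widetilde{T}_{i-1} - \gamma_1(\widetilde{t}_{2i-1}-\widetilde{t}_{2i-2}) = \widetilde{T}_{i-1}-\frac{\gamma_1}{1+\gamma_1}\widetilde{T}_{i-1} = \frac{1}{1+\gamma_1}\widetilde{T}_{i-1}$. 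Hence
\begin{equation*}
(\widetilde{t}_{2i+1}-\widetilde{t}_{2i})-\gamma_2(\widetilde{t}_{2i}-\widetilde{t}_{2i-1}) = \frac{1}{1+\gamma_1}\bigl(\widetilde{T}_i - \gamma_2\,\widetilde{T}_{i-1}\bigr).
\end{equation*}
Now I would combine the two identities $\widetilde{T}_i=\delta\widetilde{T}_{i-1}-\tau\log a$ and $\delta=\gamma_1\gamma_2$ to write $\widetilde{T}_i-\gamma_2\widetilde{T}_{i-1}=(\gamma_1\gamma_2-\gamma_2)\widetilde{T}_{i-1}-\tau\log a=\gamma_2(\gamma_1-1)\widetilde{T}_{i-1}-\tau\log a$. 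This still contains $\widetilde{T}_{i-1}$, which in general does not converge, so a direct algebraic cancellation is not available; instead I would check that the claimed limit $-\frac{1}{E_1}\log a$ is forced by consistency with the genuine times via item (1). Indeed, by Lemma~\ref{le:calculus}(1), $(t_{2i+1}-t_{2i})-\gamma_2(t_{2i}-t_{2i-1})\to -\frac{1}{E_1}\log a$, and each of the four differences $t_{2i+1}-\widetilde{t}_{2i+1}$, $t_{2i}-\widetilde{t}_{2i}$, $t_{2i}-\widetilde{t}_{2i}$, $t_{2i-1}-\widetilde{t}_{2i-1}$ tends to $0$ by item (1) and \eqref{eq:times-even-til}; therefore
\begin{equation*}
(\widetilde{t}_{2i+1}-\widetilde{t}_{2i})-\gamma_2(\widetilde{t}_{2i}-\widetilde{t}_{2i-1}) = (t_{2i+1}-t_{2i})-\gamma_2(t_{2i}-t_{2i-1}) + o(1) \longrightarrow -\frac{1}{E_1}\log a.
\end{equation*}

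I expect the only genuinely delicate point to be item (1): one must be careful that the $o(1)$ errors coming from the higher-order terms in Lemma~\ref{le:calculus}(2) and from $T_i-\widetilde{T}_i$ do not interact badly, but since all of them tend to zero and only finitely many manipulations are performed per index $i$, summability is not even needed here — plain convergence to $0$ suffices. The key structural observation making everything work is that the $\widetilde{t}$-sequence is built so that the odd increments are a fixed proportion $\tfrac{1}{1+\gamma_1}$ of the corresponding $\widetilde{T}_j$'s and the even-indexed $\widetilde{t}$-differences track the genuine $t$-differences up to a vanishing error; once item (1) is in place, item (2) follows immediately by transporting Lemma~\ref{le:calculus}(1) across the asymptotically negligible difference between the two time sequences, with no further reliance on the (non-convergent) quantities $\widetilde{T}_{i-1}$.
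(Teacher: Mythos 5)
Your proof is correct and follows essentially the same route as the paper: part (1) combines the defining relation \eqref{eq:times-odd-til} with Lemma~\ref{le:calculus}(2) and the convergence $t_{2i}-\widetilde{t}_{2i}\to 0$, and part (2) transfers Lemma~\ref{le:calculus}(1) to the adjusted times using part (1). One remark on the route you discarded: your computation of the ``incoming'' increment has a slip — from \eqref{eq:times-odd-til} one gets $\widetilde{t}_{2i}-\widetilde{t}_{2i-1}=\gamma_1(\widetilde{t}_{2i-1}-\widetilde{t}_{2i-2})=\tfrac{\gamma_1}{1+\gamma_1}\widetilde{T}_{i-1}$, not $\tfrac{1}{1+\gamma_1}\widetilde{T}_{i-1}$ — and with the correct factor the direct algebra closes exactly, since $(\widetilde{t}_{2i+1}-\widetilde{t}_{2i})-\gamma_2(\widetilde{t}_{2i}-\widetilde{t}_{2i-1})=\tfrac{1}{1+\gamma_1}\bigl(\widetilde{T}_i-\delta\,\widetilde{T}_{i-1}\bigr)=-\tfrac{\tau\log a}{1+\gamma_1}=-\tfrac{1}{E_1}\log a$ for \emph{every} $i$, an exact identity rather than a limit.
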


\begin{proof}
Taking into account \eqref{eq:times-odd-til}, we have
\begin{equation*}
\widetilde{t}_{2i+1} - t_{2i+1} =  \frac{\widetilde{t}_{2i+2} + \gamma_1\, \widetilde{t}_{2i}}{1 + \gamma_1} - t_{2i+1} = \frac{\widetilde{t}_{2i+2} + \gamma_1\, \widetilde{t}_{2i} - t_{2i+1} - \gamma_1\, t_{2i+1}}{1 + \gamma_1}
\end{equation*}
and so, from \eqref{eq:times-even-til}, we conclude that
\begin{eqnarray*}
& &\lim_{i\, \to \, + \infty}\, (1 + \gamma_1)\left(\widetilde{t}_{2i+1} - t_{2i+1}\right) - \left[(t_{2i+2} - t_{2i+1}) - \gamma_1\, (t_{2i+1} - t_{2i})\right] = \\
&=& \lim_{i\, \to \, + \infty}\, \left(\widetilde{t}_{2i+2} + \gamma_1\, \widetilde{t}_{2i} - t_{2i+1} - \gamma_1 \,t_{2i+1}\right) - \left[(t_{2i+2} - t_{2i+1}) - \gamma_1\, (t_{2i+1} - t_{2i})\right] \\
&=& \lim_{i\, \to \, + \infty}\, (\widetilde{t}_{2i+2} - t_{2i+2}) + \gamma_1\, (\widetilde{t}_{2i} - t_{2i}) = 0.
\end{eqnarray*}
Therefore, using the information of Lemma~\ref{le:calculus}(2), we get
$$\lim_{i\, \to \, + \infty}\, \widetilde{t}_{2i+1} - t_{2i+1} = 0.$$

Concerning the second part of the lemma, notice that
\begin{eqnarray*}
&&\left[(\widetilde{t}_{2i+1} - \widetilde{t}_{2i}) - \gamma_2\,(\widetilde{t}_{2i} - \widetilde{t}_{2i-1})\right] - \left[(t_{2i+1} - t_{2i}) - \gamma_2\,(t_{2i} - t_{2i-1})\right] =\\
&=&  (\widetilde{t}_{2i+1} - t_{2i+1}) - (\widetilde{t}_{2i} - t_{2i}) - \gamma_2\,(\widetilde{t}_{2i} - \widetilde{t}_{2i}) + \gamma_2\,(\widetilde{t}_{2i-1} - t_{2i-1})
\end{eqnarray*}
thus, from Lemma~\ref{le:prop-t-til}(1), the Definition \eqref{eq:times-even-til} and Lemma~\ref{le:calculus}(1) we obtain
$$\lim_{i \to +\infty} \,(\widetilde{t}_{2i+1} - \widetilde{t}_{2i}) - \gamma_2\,(\widetilde{t}_{2i} - \widetilde{t}_{2i-1}) = \lim_{i \to +\infty} \, (t_{2i+1} - t_{2i}) - \gamma_2\,(t_{2i} - t_{2i-1}) = -\,\frac{1}{E_1}\log a.$$
\end{proof}

As we are mainly interested in conjugacies and the asymptotic behavior of the sequence $\left(t_i\right)_{i \,\in\, \NN_0}$, the construction of $\left(\widetilde{t}_{2i}\right)_{i \, \in \, \NN_0}$ must be independent of the starting difference $t_{2} - t_{0}$, and so irrespective of the size of the cross sections (with a smaller section we may miss the first few intersections of the orbits with the sections). More precisely,

\begin{lemma}\label{le:delay} Consider $N \in \NN_0$, the difference $T_N = t_{2N+2} - t_{2N}$ and the sequence $\left(\widetilde{T}_{0,N}^{\,\,(i)}\right)_{i \, \in \, \NN_0}$ defined as in \eqref{eq:Ttil-ij} but starting with $T_N$ instead of $T_0$. Then
$$\widetilde{T}_{0,N} := \lim_{i \to + \infty}\, \widetilde{T}_{0,N}^{\,\,(i)} = \widetilde{T}_N.$$
\end{lemma}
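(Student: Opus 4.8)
The plan is to obtain a closed-form expression for $\widetilde{T}_{0,N}^{\,\,(i)}$ exactly as was done for $\widetilde{T}_0^{\,\,(i)}$ in \eqref{eq:T0i}, and then to reduce everything to the already-established convergence $\widetilde{T}_0^{\,\,(k)} \to \widetilde{T}_0$. Running the recursion \eqref{eq:Ttil-ij} downward from its top entry (which for the shifted family is $\widetilde{T}_{i,N}^{\,\,(i)} = T_{N+i}$ rather than $T_i$), the same finite induction that produced \eqref{eq:T0i} now yields
$$\widetilde{T}_{0,N}^{\,\,(i)} = \frac{T_{N+i} + \left(\sum_{j=0}^{i-1}\delta^j\right)\tau\log a}{\delta^i}.$$

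Next I would eliminate $T_{N+i}$ using \eqref{eq:T0i} evaluated at the index $N+i$, namely $T_{N+i} = \delta^{N+i}\,\widetilde{T}_0^{\,\,(N+i)} - \left(\sum_{j=0}^{N+i-1}\delta^j\right)\tau\log a$. Substituting and invoking the elementary identity $\sum_{j=0}^{N+i-1}\delta^j - \sum_{j=0}^{i-1}\delta^j = \delta^i\sum_{j=0}^{N-1}\delta^j$, the $\tau\log a$–terms collapse and a factor $\delta^i$ cancels, leaving
$$\widetilde{T}_{0,N}^{\,\,(i)} = \delta^N\,\widetilde{T}_0^{\,\,(N+i)} - \left(\sum_{j=0}^{N-1}\delta^j\right)\tau\log a.$$
Letting $i \to +\infty$ and using \eqref{T0} (this is where $\delta>1$, hence the convergence of $\sum_j R_j/\delta^j$, enters), one gets $\widetilde{T}_{0,N} = \delta^N\,\widetilde{T}_0 - \left(\sum_{j=0}^{N-1}\delta^j\right)\tau\log a$. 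Finally, iterating the defining recursion \eqref{eq:Ttil} gives $\widetilde{T}_N = \delta^N\,\widetilde{T}_0 - \left(\sum_{j=0}^{N-1}\delta^j\right)\tau\log a$ as well, so $\widetilde{T}_{0,N} = \widetilde{T}_N$, as claimed.

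I do not expect a genuine obstacle here: the analytic content (convergence) is entirely supplied by Lemma~\ref{le:calculus}(4) and \eqref{T0}, and no new estimate is needed. The only point requiring care is the bookkeeping with the geometric sums and, above all, the correct reading of the shifted family $\left(\widetilde{T}_{j,N}^{\,\,(i)}\right)_{j}$ — its top entry is the true difference $T_{N+i}$, and the case $N=0$ must recover the construction of Subsection~\ref{ssse:time1}, which provides a useful sanity check. An alternative route, slightly longer, would be to reprove the telescoping of Lemma~\ref{Tempos1} for the shifted sequence to get $\widetilde{T}_{0,N} = T_N + \sum_{j\geq 1} R_{N+j}/\delta^j$ and then verify directly that $\widetilde{T}_N$ has the same value; but the substitution argument above is the cleaner one.
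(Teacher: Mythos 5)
Your proof is correct and follows essentially the same route as the paper: both start from the closed-form expression for the shifted sequence, $\widetilde{T}_{0,N}^{\,\,(i)} = \bigl(T_{N+i} + (\sum_{j=0}^{i-1}\delta^j)\,\tau\log a\bigr)/\delta^i$, and both draw the convergence from Lemma~\ref{le:calculus}(4) via \eqref{T0}. The only difference is the passage to the limit: the paper re-runs the telescoping of Lemma~\ref{Tempos1} for the shifted family (getting $\widetilde{T}_{0,N}=T_N+\sum_{j\geq 1}R_{N+j}/\delta^{j}$, first for $N=1$ and then by induction on $N$), whereas you substitute \eqref{eq:T0i} at index $N+i$ to obtain $\widetilde{T}_{0,N}^{\,\,(i)}=\delta^N\,\widetilde{T}_0^{\,\,(N+i)}-(\sum_{j=0}^{N-1}\delta^j)\,\tau\log a$ and let $i\to+\infty$ directly --- a marginally cleaner piece of bookkeeping, and indeed the route you list as the ``alternative'' is exactly the one the paper takes.
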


\begin{proof} Consider $N=1$ and observe that, according to \eqref{eq:T0i},
$$\widetilde{T}_{0,1}^{\,\,(i)} = \frac{T_{i+1} + (\sum_{j=0}^{i-1} \,\delta^j) \, \tau\,\log a}{\delta^i}$$
so
$$\widetilde{T}_{0,1}^{\,\,(i+1)} - \widetilde{T}_{0,1}^{\,\,(i)} = \frac{R_{i+2}}{\delta^{i+1}}.$$
Therefore
$$\widetilde{T}_{0,1} = T_1 + \sum_{j=1}^{+ \infty} \, \frac{R_{j+1}}{\delta^{j}} = \delta\,\widetilde{T}_0 - \tau\,\log a.$$
By induction in $N$, we get
$$\widetilde{T}_{0,N} = T_N + \sum_{j=1}^{+ \infty} \, \frac{R_{j+N}}{\delta^{j}} = \delta^N\,\widetilde{T}_0 - \left(\sum_{j=0}^{N-1} \,\delta^j\right) \, \tau\,\log a = \widetilde{T}_N.$$
\end{proof}

Consequently, up to a shift of the indices from $i$ to $i - 2N$, we obtain the same sequence $\left(\widetilde{t}_{i}\right)_{i \, \in \, \NN_0}$ if we build it using the equalities in \eqref{eq:times-even-til} and \eqref{eq:times-odd-til} but starting with $t_{2N}=0$ and $\widetilde{T}_N$ instead of $t_0=0$ and $\widetilde{T}_0$.

\subsection{Realization of the sequence of times}\label{sse:realization}

As any solution of $f$ in $\mathcal{B}(\mathcal{A}_f)$ eventually hits $\Out\,(\sigma_2)$, we may apply the previous construction to all the orbits of $f$ in $\mathcal{B}(\mathcal{A}_f)$. So, given any $P_0 \in \mathcal{B}(\mathcal{A}_f)$, we take the first non-negative hitting time of the forward orbit of $P_0$ at $\Out\,(\sigma_2)$, defined by
$$t_{\Sigma_2} (P_0) = \min \,\{t \in \RR^+_0 \colon \varphi(t,\,P_0) \in \Out\,(\sigma_2)\}.$$
As $\Out^+(\sigma_2)$ and $\Out^-(\sigma_2)$ are relative-open sets, this first-hitting-time map is continuous with $P_0$. Afterwards, given
$$P = \varphi(t_{\Sigma_2}(P_0),\,P_0) = \left(1,\,\theta_0, z_0\right) \in \Out\,(\sigma_2)$$
we consider its hitting times sequence $\left(t^{\,\,(P)}_{i}\right)_{i \, \in \, \NN_0}$
and build the sequence $\left(\widetilde{t}^{\,\,\,(P)}_{i}\right)_{i \, \in \, \NN_0}$ as explained in the previous section.

We now proceed as follows. Adjusting the cross sections $\Sigma_1$ and $\Sigma_2$ 
we find a point $\widetilde{P} \in \Out\,(\sigma_2)$ in the $f-$trajectory of $P$ whose hitting times sequence is precisely $\left(\widetilde{t}^{\,\,\,(P)}_{i}\right)_{i \, \in \, \NN_0}$. 
Notice that the new cross sections are close to the previous ones since the sequences $\left(t_{i}\right)_{i \, \in \, \NN_0}$ and $\left(\widetilde{t}_{i}\right)_{i \, \in \, \NN_0}$ are uniformly close (cf. \eqref{eq:times-even-til} and Lemma~\ref{le:prop-t-til}(1)). We are left to show that there exists only one such trajectory with hitting times sequence $\left(\widetilde{t}^{\,\,\,(P)}_{i}\right)_{i \, \in \, \NN_0}$.

\subsubsection*{\textbf{\emph{Uniqueness of $\widetilde{P}$}}}\label{sse:special-point}

Given a sequence of times $\left(\widetilde{t}_i\right)_{i \, \in \, \NN_0}$ satisfying  $\widetilde{t}_0=0$, Lemma~\ref{le:prop-t-til} and the properties \eqref{eq:times-even-til}, \eqref{eq:tau log a}, \eqref{eq:times-odd-til} and \eqref{eq:omega1 omega2}, one may recover from its terms the coordinates of a point $(1, \,\theta_0, \,z_0) \in \Out^+(\sigma_2)$ whose $i$th hitting time is precisely $\widetilde{t}_i$. Firstly, we solve the equation (see \eqref{eq:t1})
\begin{equation}\label{eq:z}
\widetilde{t}_1 = -\,\frac{1}{E_1} \log \,(a\,z_0)
\end{equation}
obtaining $z_0$. Then, using \eqref{eq:t2}
\begin{equation}\label{eq:rho}
\widetilde{t}_2 = \widetilde{t}_1 -\,\frac{1}{E_2} \log \,(\rho_1)
\end{equation}
we compute $\rho_1$. And so on, getting from such a sequence of times all the values of the radial and height cylindrical coordinates $\left(\rho_{2i+1}\right)_{i \, \in \, \NN_0}$ and $\left(z_{2i}\right)_{i \, \in \, \NN_0}$ of the successive hitting points at $\Out^+(\sigma_1)$ and $\Out^+(\sigma_2)$, respectively. Not knowing an explicit expression for the function $\mathcal{S}_1$, however, nothing has been disclosed about $\theta_0$ from these computations.

Concerning the evolution in $\mathbb{R}^+$ of the angular coordinates, the spinning in average inside the cylinders is given, for every $i \in \mathbb{N}_0$, by
\begin{eqnarray}\label{eq:spin}
\frac{\theta_{2i+2} - \frac{1}{a}\,\theta_{2i}}{\widetilde{t}_{2i+2}-\widetilde{t}_{2i}} &=& \frac{(\theta_{2i+2} - \theta_{2i+1}) + (\theta_{2i+1} - \frac{1}{a}\,\theta_{2i})}{\widetilde{t}_{2i+2}-\widetilde{t}_{2i}} \nonumber\\
&=& \frac{\omega_2\,(\widetilde{t}_{2i+2} - \widetilde{t}_{2i+1}) + \omega_1\,(\widetilde{t}_{2i+1} - \widetilde{t}_{2i})}{\widetilde{t}_{2i+2}-\widetilde{t}_{2i}} \nonumber\\
&=& \frac{\omega_1 + \gamma_1\, \omega_2}{\gamma_1 + 1}
\end{eqnarray}
(cf. \eqref{local map v} and \eqref{eq:omega1 omega2}).
Moreover, \eqref{eq:times-odd-til} indicates that
$$\frac{\theta_{2i+1} - \frac{1}{a}\,\theta_{2i}}{\theta_{2i+2}-\theta_{2i+1}} = \frac{\omega_1\,(\widetilde{t}_{2i+1} - \widetilde{t}_{2i})}{\omega_2\,\left(\widetilde{t}_{2i+2} - \widetilde{t}_{2i+1}\right)} = \frac{\omega_1}{\gamma_1 \,\omega_2}.$$
So
\begin{eqnarray*}
\theta_{2i+2} - \theta_{2i} &=& (\theta_{2i+2} - \theta_{2i+1}) + \left(\theta_{2i+1} - \frac{1}{a}\,\theta_{2i}\right) + \frac{1}{a}\,\theta_{2i}\\
&=& (\theta_{2i+2} - \theta_{2i+1})\,\left(\frac{\omega_1}{\gamma_1 \,\omega_2} + 1\right) + \frac{1}{a}\,\theta_{2i}\\
&=& \omega_2\,(\widetilde{t}_{2i+2} - \widetilde{t}_{2i+1})\,\left(\frac{\omega_1}{\gamma_1 \,\omega_2} + 1\right) + \frac{1}{a}\,\theta_{2i} \\
&=& \frac{\omega_1 + \gamma_1\, \omega_2}{\gamma_1}\,(\widetilde{t}_{2i+2} - \widetilde{t}_{2i+1}) + \frac{1}{a}\,\theta_{2i}.
\end{eqnarray*}
On the other hand, from \eqref{eq:spin} we get
\begin{eqnarray*}
\theta_{2i+2} - \theta_{2i} &=& \left(\theta_{2i+2} - \frac{1}{a}\,\theta_{2i}\right) + \theta_{2i}\,\left(\frac{1}{a} - 1\right) \\
&=& \frac{\omega_1 + \gamma_1\, \omega_2}{\gamma_1 + 1}\,(\widetilde{t}_{2i+2}-\widetilde{t}_{2i}) + \theta_{2i}\,\left(\frac{1}{a} - 1\right).
\end{eqnarray*}
Consequently,
\begin{equation*}
\frac{\omega_1 + \gamma_1\, \omega_2}{\gamma_1}\,(\widetilde{t}_{2i+2} - \widetilde{t}_{2i+1}) + \frac{1}{a}\,\theta_{2i} = \frac{\omega_1 + \gamma_1\, \omega_2}{\gamma_1 + 1}\,(\widetilde{t}_{2i+2}-\widetilde{t}_{2i}) + \theta_{2i}\,\left(\frac{1}{a} - 1\right)
\end{equation*}
from which the angular coordinate $\theta_{2i}$ is uniquely determined:
\begin{eqnarray}\label{eq:theta}
\theta_{2i} &=& \left(\omega_1 + \gamma_1\, \omega_2\right)\,\left[\frac{\widetilde{t}_{2i+2} - \widetilde{t}_{2i}}{\gamma_1 + 1} - \frac{\widetilde{t}_{2i+2} - \widetilde{t}_{2i+1}}{\gamma_1}\right].
\end{eqnarray}
In particular, we confirm that there is a unique solution $\theta_0$.

\subsection{The conjugacy between $f$ and $g$}
Let $\overline{\sigma}_1$ and $\overline{\sigma}_2$ be the two hyperbolic saddle-foci of $g$ whose eigenvalues are, respectively,
\begin{eqnarray*}
-\,\overline{C}_{1} \pm \overline{\omega}_1 \,i &\quad \quad \text{and}  \quad \quad & \quad \overline{E}_{1} \\
\overline{E}_{2} \pm \overline{\omega}_2 \,i &\quad  \quad \text{and}  \quad \quad & -\,\overline{C}_{2} \nonumber
\end{eqnarray*}
where
$$\overline{\omega}_1 > 0, \quad \overline{\omega}_2 > 0, \quad \overline{C}_1 > \overline{E}_1 > 0 \quad \text{and} \quad \overline{C}_2 > \overline{E}_2 > 0$$
whose values define the invariants of $g$ which, by assumption, satisfy the equalities \eqref{eq:five-invariants}.

Consider linearizing neighborhoods of $\overline{\sigma_1}$ and $\overline{\sigma_2}$, with the corresponding cylindrical coordinates, and take a point $P=(1,z_0,\theta_0) \in \Sigma_2 \cap \Out^+(\sigma_2)$, the corresponding hitting times sequence $\left(t_i\right)_{i \, \in \, \NN_0}$ at cross sections $\Sigma_1$ and $\Sigma_2$ and the sequence of times $\left(\widetilde{t}_i\right)_{i \, \in \, \NN_0}$ obtained in Subsection~\ref{ssse:time2}. Next, we find a unique point $Q_P$, given in local coordinates by $(1,\overline{z_0}, \overline{\theta_0})$, as done for $f$ in Subsection~\ref{sse:special-point} using estimates similar to \eqref{eq:z}, \eqref{eq:rho} and \eqref{eq:theta}:
\begin{eqnarray}\label{eq:point-for-g}
\overline{z_0} &=& \frac{e^{-\overline{E_1}\,\widetilde{t}_1}}{\overline{a}} \\
\overline{\rho_1} &=& e^{-\overline{E_2}\,\left(\widetilde{t}_2 - \widetilde{t}_1\right)} \nonumber\\
\overline{\theta_{0}} &=& \left(\omega_1 + \gamma_1\, \omega_2\right)\,\left[\frac{\widetilde{t}_{2}}{\gamma_1 + 1} - \frac{\widetilde{t}_{2} - \widetilde{t}_{1}}{\gamma_1}\right]\label{eq:theta-g}.
\end{eqnarray}
The set of these points build cross sections $\overline{\Sigma_1}$ and $\overline{\Sigma_2}$ for $g$ at which the points $Q_P$ have the prescribed hitting times $\left(\widetilde{t}_i\right)_{i \, \in \, \NN_0}$ by the action of $g$. Afterwards, we take the map
$$H: P \in \Sigma_2 \cap \Out^+(\sigma_2) \quad \mapsto \quad Q_P$$
and extend it using the flows $\varphi$ and $\overline{\varphi}$ of $f$ and $g$, respectively: for every $t \in \RR$, set $H(\varphi_t(P))=\overline{\varphi}_t(H(P)).$ An analogous construction is repeated for $\Out^-(\sigma_2)$.

\begin{lemma}
$H$ is a conjugacy.
\end{lemma}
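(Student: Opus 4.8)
The aim is to show that $H$ meets the definition of topological conjugacy from Subsection~\ref{ssec:Modulus}: it should be a homeomorphism from a neighborhood of $\mathcal{A}_f$ onto a neighborhood of $\mathcal{A}_g$ satisfying $H\circ\varphi_t=\overline{\varphi}_t\circ H$ for every $t\in\RR$. The commutation with the flows is immediate from the extension rule $H(\varphi_t(\widetilde{P}))=\overline{\varphi}_t(Q_P)$, so the actual work splits into: (a) the extension is consistent, hence $H$ is well defined on a full neighborhood of $\mathcal{A}_f$; (b) $H$ is bijective; (c) $H$ and $H^{-1}$ are continuous. The plan is to treat (a)--(c) in turn. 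Throughout, $H^{-1}$ is produced by running the very same construction with the roles of $f$ and $g$ exchanged, which is legitimate because the four quantities in \eqref{eq:five-invariants} are symmetric in the two vector fields.

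\emph{Consistency and well-definedness (item (a)).} A point of $\mathcal{B}(\mathcal{A}_f)$ admits more than one representation $\varphi_t(\widetilde{P})$ with $\widetilde{P}$ on the adjusted cross section precisely when two such representatives $\widetilde{P},\widetilde{P}'$ lie on the same $f$-orbit, so that $\widetilde{P}'$ is the $N$-th return of $\widetilde{P}$, reached after the adjusted time $\widetilde{t}_{2N}$. The adjusted hitting-times sequence attached to $\widetilde{P}'$ is then $(\widetilde{t}_{i+2N}-\widetilde{t}_{2N})_{i\in\NN_0}$; by Lemma~\ref{le:delay} and the remark that follows it, this is exactly the adjusted sequence one would build directly from $\widetilde{P}'$, and it is also the sequence of $\overline{\varphi}$-hitting times of $\overline{\varphi}_{\widetilde{t}_{2N}}(Q_P)$ at the adjusted cross sections of $g$. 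Since a point of $\overline{\Sigma}_2$ is uniquely determined by its hitting-times sequence (uniqueness of $\widetilde{P}$, Subsection~\ref{sse:realization}, applied to $g$), we conclude $Q_{P'}=\overline{\varphi}_{\widetilde{t}_{2N}}(Q_P)$, so the two prescriptions for $H(\widetilde{P}')$ agree; the same delay-invariance makes the construction independent of the size of the cross sections. We then set $H(\sigma_j)=\overline{\sigma}_j$ for $j=1,2$, send the one- and two-dimensional heteroclinic connections of $\mathcal{A}_f$ onto those of $\mathcal{A}_g$ along the flows, and perform the symmetric construction on $\Out^-(\sigma_2)$, glued along $\Fix(\Gamma_2)$; since every orbit in $\mathcal{B}(\mathcal{A}_f)$ meets $\Out(\sigma_2)$, the flow-saturation of the adjusted cross sections is a full neighborhood of $\mathcal{A}_f$.

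\emph{Bijectivity and continuity (items (b) and (c)).} On the cross section, $\widetilde{P}\mapsto(\widetilde{t}^{(P)}_1,\widetilde{t}^{(P)}_2)$ is injective: by \eqref{eq:z}, \eqref{eq:rho} and \eqref{eq:theta}, and using $\omega_1+\gamma_1\omega_2>0$, these two numbers recover $z_0$ and $\theta_0$, hence $\widetilde{P}$; by \eqref{eq:point-for-g} the same two numbers determine $Q_P$ and are recovered from it, so $H$ is a bijection of the adjusted $\Out^+(\sigma_2)$-section of $f$ onto that of $g$, and, after saturating by the flows, a bijection of the two neighborhoods that restricts to the natural identification $\mathcal{A}_f\to\mathcal{A}_g$. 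The symmetric construction yields a map that, again by uniqueness of realization, is a two-sided inverse of $H$. For continuity, the first-hitting time $P_0\mapsto t_{\Sigma_2}(P_0)$ is continuous because $\Out^{\pm}(\sigma_2)$ are relatively open and transverse to the flow; $P\mapsto(t^{(P)}_i)_{i\in\NN_0}$ is continuous by the local formulas of Section~\ref{se:hitting-times}; and $P\mapsto(\widetilde{t}^{(P)}_i)_{i\in\NN_0}$ is continuous because the remainders satisfy $R_i=\mathcal{O}((a\,z_{2i})^{\delta_1\varepsilon})$ and the series $\sum_i R_i/\delta^i$ and $\sum_i(T_i-\widetilde{T}_i)$ converge uniformly on compact subsets of the cross section (Lemmas~\ref{le:calculus}(4) and \ref{le:convergence}). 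Composing with \eqref{eq:point-for-g} and the continuous flow $\overline{\varphi}$ gives continuity of $H$ on $\mathcal{B}(\mathcal{A}_f)$, and symmetrically of $H^{-1}$. Continuity of $H$ at points of $\mathcal{A}_f$ is obtained as in \cite{Takens1994}: if $X_n\to X\in\mathcal{A}_f$, then by Proposition~\ref{thm:A} and the estimates of Section~\ref{se:invariants} the orbit of $X_n$ dwells in longer and longer stretches near $\sigma_1$ and near $\sigma_2$, with combinatorics dictated by the hitting-times recursions, the $g$-orbit of $H(X_n)$ tracks $\mathcal{A}_g$ with the same pattern, and hence $H(X_n)\to H(X)$; the cases $X\in[\sigma_1\to\sigma_2]$ or $X$ in the two-dimensional connection are analogous, and together with $H(\sigma_j)=\overline{\sigma}_j$ they yield continuity on all of $\mathcal{A}_f$.

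\emph{Main obstacle.} The two genuinely delicate points are the consistency of the flow-extension and the continuity of $H$ along $\mathcal{A}_f$. The first is exactly what makes Lemma~\ref{le:delay} indispensable: without the delay-invariance of the adjusted sequence $(\widetilde{t}_i)_{i\in\NN_0}$, the value of $H$ would depend on the arbitrary choice of which intersection with $\Out(\sigma_2)$ is labelled the first one, and $H$ would fail to commute with the flows. The second requires controlling $H$ where the cross-section picture degenerates — on the heteroclinic connections and at the equilibria — and is handled with the asymptotic estimates already established together with the description of the dynamics in $\mathcal{B}(\mathcal{A}_f)$ furnished by Proposition~\ref{thm:A}, following Takens' argument in \cite{Takens1994}.
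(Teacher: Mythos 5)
Your proof is correct and follows essentially the same route as the paper: injectivity and the inverse via the symmetric construction with $g$, continuity from the dependence of the first few adjusted hitting times on the initial point, and Lemma~\ref{le:delay} together with the uniqueness-of-realization argument to make everything independent of the chosen cross sections. You are in fact more explicit than the paper on two points it leaves implicit — the consistency of the flow-extension (well-definedness of $H$ on a full neighborhood) and the continuity of $H$ at points of $\mathcal{A}_f$ itself — which is a welcome sharpening rather than a deviation.
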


\begin{proof}
Firstly, given two initial points $P_1 \neq P_2$ in $\Sigma_2 \cap \Out^+(\sigma_2)$, their hitting times sequences (cf. \eqref{eq:t1} and \eqref{eq:t2}) are not only different but not even uniformly close due to the expanding components of the saddle-type dynamics in the linearizing neighborhoods of $\sigma_1$ and $\sigma_2$. Therefore, $P_1$ and $P_2$ are mapped under $H$ into different points. So $H$ is injective.

Notice that, for this conclusion, it is essential that $\mathcal{A}_f$ and $\mathcal{A}_g$ are global attractors, ensuring that the $\rho$ and $z$ coordinates decrease to $0$ as time goes to $+ \infty$ along the orbits of initial conditions different from the equilibria. This in turn implies that:
(i) the time deviations (which are expressed by the maps $\mathcal{S}_j$ and $\mathcal{T}_j$ in the formulas of Subsection~\ref{sse:local saddle-foci}) from the time estimates done in Section~\ref{se:hitting-times} are asymptotically arbitrarily small, and so their impact is negligible;
(ii) the time increments caused by the twisting around the equilibria (cf item (4) of Corollary~\ref{cor:speed-of-convergence}) are the same for $f$ and $g$ because $\omega_1 + \gamma_1\,\omega_2 = \overline{\omega_1} + \gamma_1\,\overline{\omega_2}$, and so their impact may be discarded.
 We also remark that, if we repeat the previous construction starting with $g$ instead of $f$, we obtain a map that must be the inverse of $H$. And, indeed, so it is since $\omega_1 + \gamma_1\,\omega_2 = \overline{\omega_1} + \gamma_1\,\overline{\omega_2}$, because the angular deviations, due to the twisting around the equilibria and which intervene in the computation \eqref{eq:theta-g}, are the same for $f$ and $g$.

Secondly, if $P_1$ and $P_2$ are close enough, then the first terms of the corresponding hitting times sequences are sufficiently near to ensure that $\widetilde{P_1}$ is arbitrarily close to $\widetilde{P_2}$ (cf. \eqref{eq:point-for-g} and \eqref{eq:theta-g}). Thus $H$ is continuous in $\Sigma_2 \cap \Out^+(\sigma_2)$, and its extension to $\mathcal{B}(\mathcal{A}_f)$ is continuous by definition. This ends the proofs of the lemma and Theorem~\ref{thm:B}.

Finally, we observe that the conjugacy $H$ extends to $\mathcal{A}_f$.

\end{proof}

\end{document}